\theoremstyle{plain}
\newtheorem*{theorem*}{Theorem}
\newtheorem*{lemma*} {Lemma}
\newtheorem*{corollary*} {Corollary}
\newtheorem*{proposition*}{Proposition}
\newtheorem*{conjecture*}{Conjecture}
\newtheorem{theorem}{Theorem}[section]
\newtheorem{lemma}[theorem]{Lemma}
\newtheorem*{theorem1*}{Theorem 1}
\newtheorem*{theorem2*}{Theorem 2}
\newtheorem*{theorem3*}{Theorem 3}
\newtheorem{proposition}[theorem]{Proposition}
\newtheorem{problem}[theorem]{Problem}
\newcommand{\co}{\colon \thinspace}
\theoremstyle{remark}
\newtheorem*{definition}{Definition}
\newtheorem{example*}{Example}
\newtheorem*{claim}{Claim}
\theoremstyle{definition}
\def\op{\operatorname}
\def\G{\Gamma}
   \def\Z{\Bbb{Z}}  
 \def\a{\alpha}  
 \def\bp{\begin{pmatrix}}
\def\sm{\setminus} \def\ep{\end{pmatrix}} \def\bn{\begin{enumerate}} 
   \def\en{\end{enumerate}}
\def\ba{\begin{array}} \def\ea{\end{array}}  
   \def\a{\alpha} \def\b{\beta} \def\wti{\widetilde}
\def\ker{\op{ker}}\def\be{\begin{equation}} \def\ee{\end{equation}} 
 \def\hom{\op{Hom}}
\def\GG{\mathcal{G}}
\def\ol{\overline}
\def\wti{\widetilde}
\def\what{\widehat}
\def\F{\Bbb{F}}
\begin{document}
\title{Grothendieck rigidity of 3-manifold groups}
\author{Michel Boileau}
\address{
Aix-Marseille Univ, CNRS, Centrale Marseille,
13453 Marseille, France}
\email{michel.boileau@cmi.univ-mrs.fr }

\author{Stefan Friedl}
\address{Fakult\"at f\"ur Mathematik\\ Universit\"at Regensburg\\   Germany}
\email{sfriedl@gmail.com}

\begin{abstract}
We show that fundamental groups of compact, orientable, irreducible 3-manifolds with toroidal boundary are Grothendieck rigid.
\end{abstract}

\maketitle

\section{Introduction}
Given a group $\pi$ we denote by $\what{\pi}$ its profinite completion. (We refer to Section~\ref{section:definition-profinite} for the definition.) In 1970 Grothendieck~\cite[p.~384]{Gr70} implicitly
posed the following problem:

\begin{problem}\emph{\textbf{(Grothendieck)}}\label{qu:grothendieck} Let $\pi$ be a finitely presented residually finite group and let $\Gamma\subset \pi$  be a finitely presented proper subgroup. Is it 
possible for the inclusion induced map $\what{\Gamma}\to \what{\pi}$ of profinite completions to be an isomorphism?
\end{problem}

Bridson--Grunewald~\cite{BG04}  solved Grothendieck's problem by producing many examples of such pairs $(\pi, \Gamma)$ of finitely presented groups such that the inclusion induced map $\what{\Gamma}\to \what{\pi}$ is indeed an isomorphism. An earlier example with a finitely generated (but not finitely presented) proper subgroup 
$\Gamma$ had been given by Platonov--Tavgen~\cite{PT86}.  Grothendieck's problem motivated the following definition introduced by Long-Reid~\cite{LR11}:

\begin{definition} A group $\pi$ is 
\emph{Grothendieck rigid} if for every finitely generated proper subgroup $\Gamma\subset \pi$ the inclusion induced map $\what{\Gamma}\to \what{\pi}$ of profinite completions is \emph{not} an isomorphism. 
\end{definition}

Note that in the definition of Grothendieck rigid one considers all finitely generated subgroups whereas in Grothendieck's question only finitely presented subgroups were considered.  Recall that a group  $\pi$ is called coherent if every finitely generated subgroup is in fact finitely presented.
For example 3-manifold groups are coherent by Scott's core theorem~\cite{Sco73},

It is an interesting question to determine which finitely presented groups are Grothen\-dieck rigid.
Cavendish~\cite{Ca12}, see also  Reid~\cite[Theorem~8.3]{Rei15}, showed that the fundamental groups of  irreducible \emph{closed} 3-manifolds are Grothendieck rigid. We extend this result to irreducible 3-manifolds with toroidal boundary. More precisely, we have the following theorem.

\begin{theorem}\label{thm:grothendieck-rigid}
The fundamental group of any irreducible, orientable, compact, connected 3-manifold with empty or toroidal boundary is Grothendieck rigid.
\end{theorem}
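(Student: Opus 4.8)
The plan is to show that for any finitely generated proper subgroup $\Gamma \subset \pi = \pi_1(M)$, the map $\what{\Gamma} \to \what{\pi}$ fails to be an isomorphism. I would split the argument according to the geometry of the subgroup $\Gamma$, using the Scott core theorem to realize $\Gamma$ as $\pi_1$ of a compact core, and then passing to the covering space $\wti{M} \to M$ corresponding to $\Gamma$. The key dichotomy is whether the cover $\wti M$ has finite or infinite volume in the relevant geometric pieces, i.e.\ roughly whether $\Gamma$ has finite or infinite index. If $[\pi : \Gamma] < \infty$ but $\Gamma \neq \pi$, then already $\what{\Gamma} \to \what{\pi}$ is the inclusion of a proper finite-index subgroup of the profinite completion (here one uses that $\pi$ is residually finite, which holds for 3-manifold groups by Hempel/geometrization), hence not onto. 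So the real content is the infinite-index case.

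For $\Gamma$ of infinite index, the first step is to distinguish based on whether $b_1(\Gamma) > b_1$ of relevant covers, but more robustly: a finitely generated infinite-index subgroup of a 3-manifold group is, after taking the compact core, either a subgroup carried by a $\pi_1$-injective subsurface (the "geometrically finite / surface-like" case) or contains a free factor, or is itself a 3-manifold group of a cover with nonempty new boundary. The crucial point is that if $\what{\Gamma}\to\what\pi$ were an isomorphism, then $\Gamma$ and $\pi$ would have the same finite quotients, hence the same $\F_p$-homology growth and the same first Betti number of all finite-index subgroups. I would exploit that 3-manifold groups are \emph{good} in the sense of Serre (by Wilton--Zalesskii / Cavendish), so that $H^*(\what\pi;\F_p) \cong H^*(\pi;\F_p)$ and likewise for $\Gamma$; an isomorphism $\what\Gamma\to\what\pi$ then forces $H^*(\Gamma;\F_p)\cong H^*(\pi;\F_p)$ for all $p$, and compatibly with cup products. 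Comparing $H^3$: since $M$ is aspherical (irreducible with infinite $\pi_1$, or handle $S^3$/finite $\pi_1$ trivially), $\pi$ is a $PD_3$-group over $\F_p$ when $M$ is closed, giving $H^3(\pi;\F_p)=\F_p$, whereas an infinite-index subgroup $\Gamma$ has cohomological dimension $\le 2$ (the cover $\wti M$ is a noncompact aspherical $3$-manifold), so $H^3(\Gamma;\F_p)=0$ — contradiction.

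When $\partial M \neq \varnothing$, $\pi$ itself has cohomological dimension $2$, so this crude dimension count does not immediately work, and this is where the main obstacle lies. Here I would instead use the LHS / profinite duality: $M$ with toroidal boundary, being aspherical, makes $\pi$ a $PD_3$-pair group, and one should compare $H^2$ with $\F_p$-coefficients together with the image of $H^2_c$ or the cup-product pairing $H^1 \times H^1 \to H^2$ relative to the boundary. Alternatively, and more in the spirit of Cavendish and Reid, I would use the fact that $\pi$ is virtually special (Agol, Wise) and in particular virtually retracts onto any finitely generated subgroup: there is a finite-index $\pi' \le \pi$ containing $\Gamma$ and a retraction $\rho\co \pi' \onto \Gamma$. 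If $\what\Gamma \to \what\pi$ were an isomorphism, then since $\what{\pi'}$ is finite-index in $\what\pi$, one gets that $\what{\pi'}\to\what\pi$ would have to be onto with $\what\Gamma$ sitting as a retract — a careful bookkeeping of indices in the profinite completions shows $\Gamma$ must have finite index in $\pi$, reducing to the first case, and then properness gives the contradiction. The technical heart is making the virtual-retraction argument interact correctly with profinite completions (using that separability of $\Gamma$ plus the retraction means $\what\Gamma$ injects into $\what{\pi'}$ as a genuine retract), and handling the small number of non-hyperbolic cases (Seifert fibered, Sol, and graph manifolds) separately by direct computation, since there virtual specialness may fail but Grothendieck rigidity can be checked by hand using the explicit structure of their finite-index subgroups.
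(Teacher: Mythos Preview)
Your treatment of the closed case is essentially the paper's (and Cavendish's): goodness forces $H^3(\Gamma;\F_p)\cong H^3(\pi;\F_p)=\F_p$, while an infinite-index $\Gamma$ sits in a noncompact aspherical cover and hence has cohomological dimension $\le 2$. That part is fine.

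The boundary case, however, has a genuine gap. Your key step is ``$\pi$ is virtually special and in particular virtually retracts onto any finitely generated subgroup''. Neither half of this is available in the generality you need. First, virtual specialness (via Haglund's theorem) yields virtual retractions only onto \emph{quasiconvex} (equivalently, convex-cocompact) subgroups, not onto arbitrary finitely generated ones; a fibre surface group in a fibred cusped hyperbolic $3$--manifold is the standard counterexample. One can rescue the hyperbolic case via the tameness dichotomy (geometrically finite versus virtual fibre) as Long--Reid do, but that is already more than ``bookkeeping''. Second, not every irreducible $3$--manifold with toroidal boundary is virtually special: there are graph manifolds that are not (Liu; Przytycki--Wise), and some are not even LERF (Niblo--Wise, after Burns--Karrass--Solitar), so the separability shortcut is blocked exactly where you most need it. Your final clause --- handle Seifert, Sol, and graph manifolds ``by direct computation'' --- is therefore carrying almost the entire content of the theorem. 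Graph and mixed manifolds are not a residual case; they are the case.

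What the paper actually does for $\partial M\ne\varnothing$ is quite different. After the same goodness/Euler-characteristic step showing the Scott core $K$ has toroidal boundary, it passes (via finite covers) to clean manifolds whose JSJ-graphs have girth $\ge 3$, and then compares the JSJ graph-of-groups structures using the \emph{profinite} Bass--Serre tree. Acylindricity of the $\what{\pi_1(M)}$-action (Hamilton--Wilton--Zalesskii) together with the Wilton--Zalesskii fixed-vertex lemmas forces a bijection on JSJ-vertices and an identification of vertex groups; Grothendieck rigidity of the individual pieces (Long--Reid for cusped hyperbolic, Scott's LERF for Seifert) then finishes each vertex, and a short combinatorial argument matches the edges. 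The JSJ/profinite-tree machinery is precisely what replaces the unavailable global LERF/virtual-retract property; your outline does not supply a substitute for it.
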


At least according to our proof it seems like the case of non-empty boundary is significantly harder than the closed case.

\subsection*{Conventions}
Unless we say something else, all 3-manifolds in this paper are understood to be connected, compact and orientable.

\subsection*{Acknowledgments}
Work on this paper was supported by the SFB 1085 `Higher Invariants' at the Universit\"at Regensburg funded by the Deutsche Forschungsgemeinschaft (DFG). 
Both authors are very grateful to the Newton Institute at Cambridge where both spent productive and happy weeks in the spring of 2017. We are very grateful to Gareth Wilkes  and Henry Wilton for many very helpful conversations and we are grateful to Martin Bridson for providing feedback on the first version of the paper.

\section{Preliminaries}
\label{section:profinite}

\subsection{Clean manifolds}\label{section:clean}

\begin{definition}
We say that a 3-manifold $W$ is \emph{clean} if the following conditions are satisfied:
\bn
\item $W$ is irreducible, 
\item  $W$ has empty or toroidal boundary, 
\item all JSJ-components of $W$ that are Seifert fibered spaces are products and
\item  no JSJ-component of $W$ is of  the form $T^2\times I$.
\en
\end{definition}

It is a consequence of the characteristic pair theorem, see e.g.\  \cite[p.~138]{JS79} or \cite[Theorem 1.7.7]{AFW15} that a clean 3-manifold does not contain a twisted $I$-bundle over the Klein bottle.

\begin{proposition}\label{prop:covered-by-clean}
Any irreducible 3-manifold with empty or toroidal boundary that is not a closed Seifert fibered manifold, nor finitely covered by a torus-bundle admits a finite covering that is clean.
\end{proposition}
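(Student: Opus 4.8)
The plan is to achieve the four defining conditions of cleanness one at a time by passing to suitable finite covers. Write $W=W_1\cup\dots\cup W_n$ for the pieces of the JSJ decomposition of $W$, glued along a (possibly empty) family of tori. Conditions (1) and (2) hold automatically for \emph{every} finite cover: the hypotheses rule out $W$ being a spherical space form or a ball, so $\pi_1(W)$ is infinite and $W$ is aspherical; hence any finite cover $\widehat W$ has $\pi_2(\widehat W)=0$ and is therefore irreducible, and $\partial\widehat W$ covers a union of tori, so is a union of tori. Moreover, since $W$ is not a closed Seifert fibered manifold, every Seifert fibered JSJ piece of $W$ has non-empty toroidal boundary: if the JSJ decomposition is non-trivial then every piece does, and if it is trivial and the single piece $W$ happens to be Seifert fibered, then it is not closed, so it has toroidal boundary. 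The same is inherited by any finite cover, since each JSJ piece of a cover maps onto a JSJ piece of $W$.

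For condition (3), fix a Seifert fibered piece $W_i$ with base $2$-orbifold $B_i$, where $\partial B_i\ne\varnothing$. Then $\pi_1^{\mathrm{orb}}(B_i)$ is virtually free, hence has a finite-index normal torsion-free subgroup; the corresponding finite orbifold cover of $B_i$ is an orientable surface $F_i$ with non-empty boundary, and pulling the Seifert fibration back along it gives a finite cover of $W_i$ that is an $S^1$-bundle over $F_i$, necessarily trivial since $H^2(F_i;\Z)=0$ --- that is, a product $F_i\times S^1$. To promote these piecewise covers to a single finite cover of $W$ I would invoke the separability properties of $\pi_1(W)$: using the efficiency of the JSJ decomposition (Hempel, Wilton--Zalesskii, Wilkes) one can choose finite quotients $\pi_1(W_i)\onto Q_i$, with kernel projecting to a torsion-free subgroup of $\pi_1^{\mathrm{orb}}(B_i)$, so that they agree on the edge ($\Z^2$) subgroups and therefore amalgamate to a finite quotient of $\pi_1(W)$; the associated cover $\widehat W\to W$ then restricts to a product cover over every Seifert piece. (Alternatively one may simply cite the classical fact that every such $W$ is finitely covered by one all of whose Seifert JSJ pieces are products.) One must then note that in $\widehat W$ the preimages of the JSJ tori need not be minimal, so that adjacent product pieces whose $S^1$-directions match up get amalgamated; but an amalgam of products along fibre-matching tori is again an $S^1$-bundle over a surface, which still has boundary --- as a JSJ piece of $\widehat W$ cannot be closed unless $\widehat W$ is itself a closed Seifert fibered manifold, which the hypotheses and geometrization exclude --- hence again a product, so (3) persists.

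It remains to arrange condition (4). A product $F\times S^1$ is a $T^2\times I$ precisely when $F$ is an annulus, so after the amalgamations above a $T^2\times I$ piece could only arise from $S^1$-bundles over planar pieces; but a minimal JSJ family leaves no complementary $T^2\times I$ piece unless the ambient manifold is a torus bundle over $S^1$ --- excluded here, since $\widehat W$, being a finite cover of $W$, is not finitely covered by a torus bundle --- or is $T^2\times I$ itself, which we also exclude. Hence no JSJ piece of $\widehat W$ is a $T^2\times I$, and $\widehat W$ is clean. The step I expect to be the real obstacle is the globalisation in condition (3): the fibrewise statements are routine, but producing a single finite cover of $W$ that simultaneously trivialises every Seifert piece is exactly where separability/efficiency of the JSJ decomposition is needed; the remaining delicate point is the bookkeeping around $T^2\times I$ pieces and making precise how the two excluded families --- closed Seifert fibered manifolds and manifolds finitely covered by torus bundles --- are exactly what is needed to rule out the bad cases.
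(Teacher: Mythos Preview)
Your approach is correct and essentially the same as the paper's. The paper is terser: it simply cites Hempel \cite{He87} (and \cite[Section~4.3]{AF13}) for a finite regular cover whose Seifert fibered JSJ-components are all $S^1$-bundles over orientable surfaces, observes these have boundary since $N$ is not closed Seifert fibered (hence are products), and then dispatches condition (4) in one line via the torus-bundle exclusion. Your sketch of the globalisation step via orbifold covers and efficiency is exactly what underlies Hempel's argument, and your explicit discussion of amalgamating lifted Seifert pieces is a subtlety the paper absorbs by stating Hempel's conclusion directly at the level of the JSJ-components of the cover rather than preimages. One small point: your clause ``or is $T^2\times I$ itself, which we also exclude'' is not actually justified by the stated hypotheses---$T^2\times I$ and the twisted $I$-bundle over the Klein bottle satisfy them but admit no clean cover---though the paper's proof has the same lacuna, and these cases are trivially Grothendieck rigid anyway.
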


\begin{proof}
Let $N$ be an irreducible 3-manifold with empty or toroidal boundary that is not a closed Seifert fibered manifold. By   \cite{He87} (see also \cite[Section~4.3]{AF13}) there exists a finite regular covering $\wti{N}$  such that all Seifert fibered JSJ-components of $\wti{N}$ are $S^1$-bundles over an orientable surface. Since $N$ is not a closed Seifert fibered manifold the JSJ-components of $\wti{N}$ that are Seifert fibered have boundary, i.e.\ they are products of $S^1$ with an orientable surface. Since $N$ is not finitely covered by a torus bundle, no JSJ-component is of the form $T^2\times I$. 
\end{proof}


\subsection{Girth of the JSJ-graph}

The JSJ-decomposition of an aspherical  3-manifold $M$ with empty or toroidal boundary gives rise to a JSJ-graph 
$\Gamma(M)$ where the vertices correspond to the  JSJ-components and the edges correspond to the JSJ-tori of $M$. There exists a canonical map $\pi\colon M\to \Gamma(M)$ that is given by sending the JSJ-components to the corresponding vertices and by sending for each JSJ-torus $T$ the points in $T\times [-1,1]$ onto the corresponding edge. This map induces an epimorphism 
$\pi_{*}\colon \pi_1(M) \to \pi_1(\Gamma(M))$. 

\begin{definition} The length of a cycle in a graph is the number of edges of the cyle. The \emph{girth} of a graph is the minimal length among the cycles contained in the graph. If the graph does not contain any cycles, its girth is defined to be infinity.
\end{definition}

\begin{proposition}\label{prop:JSJ-girth}
Any clean 3-manifold with empty or toroidal boundary and non-trivial JSJ-decomposition admits a finite covering with JSJ-graph of girth $\geq 3$.
\end{proposition}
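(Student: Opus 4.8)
The plan is to realize the JSJ-graph $\Gamma$ of a clean 3-manifold $M$ as the underlying graph of a finite graph of groups with fundamental group $\pi_1(M)$, and then to find a finite-index subgroup whose induced covering of $\Gamma$ has girth $\geq 3$. Concretely, $\pi_1(M)$ acts on the Bass--Serre tree $T$ associated to the JSJ-decomposition, with $T/\pi_1(M) = \Gamma$. A cycle of length $1$ (a loop) or length $2$ (a bigon) in $\Gamma$ comes from an edge $e$ of $T$ whose two endpoints $v_0, v_1$ lie in the same $\pi_1(M)$-orbit, or from two edges $e, e'$ with matching orbit patterns on their endpoints. So it suffices to produce a finite-index subgroup $\pi' \subset \pi_1(M)$ such that in the quotient graph $T/\pi'$ no edge has its endpoints identified and no two distinct edges have the same pair of endpoint-orbits. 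Since girth $\geq 3$ only needs to exclude loops and bigons, this is a purely combinatorial/group-theoretic condition that I expect to follow from separability.

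First I would recall that the edge and vertex groups of this graph-of-groups decomposition are the fundamental groups of the JSJ-tori and of the JSJ-components; the cleanness hypothesis guarantees (as noted after the definition of clean) that no twisted $I$-bundle over the Klein bottle appears, which keeps the edge-to-vertex inclusions well-behaved (in particular the relevant $\pi_1$ of the tori inject). The key input is that $\pi_1(M)$ is virtually special in the sense of Agol--Wise when $M$ has a hyperbolic piece, and more generally that 3-manifold groups are LERF (Agol, Wise, Przytycki--Wise); separability of the edge and vertex subgroups lets me "unfold" short cycles. The standard tool here is the result (going back to Stallings-type folding arguments, and used e.g. by Wise and by Hsu--Wise, and in the 3-manifold setting by Przytycki--Wise) that if all edge groups of a finite graph of groups with $\pi_1 = \pi$ are separable in $\pi$, then $\pi$ has a finite-index subgroup whose Bass--Serre graph is "large" — in particular has arbitrarily large girth — because one can first pass to a cover killing loops (using that an element conjugating one vertex group into another, across a single edge, can be detected in a finite quotient) and then a further cover killing bigons.

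The key steps, in order: (1) set up the graph of groups $(\Gamma, \{G_v\}, \{G_e\})$ for the JSJ-decomposition of the clean manifold $M$, with $\pi_1(M)$ acting on the Bass--Serre tree $T$; (2) observe that a length-$1$ cycle in a cover corresponds to an edge stabilizer being conjugate, via an element of $\pi_1(M)$ not in the relevant vertex group, into itself in a way that closes up a loop, and a length-$2$ cycle corresponds to an analogous "double coset" coincidence; (3) invoke separability of the edge groups $G_e$ (peripheral tori) and vertex groups $G_v$ in $\pi_1(M)$ — available because $\pi_1(M)$ is LERF by Agol--Wise--Przytycki--Wise — to produce a finite-index normal subgroup $\pi' \trianglelefteq \pi_1(M)$ whose associated covering $M' \to M$ has $T/\pi'$ with no loops and no bigons, i.e. girth $\geq 3$; (4) check that the cover $M'$ is still clean (cleanness passes to finite covers: irreducibility, toroidal boundary, product Seifert pieces, and the no-$T^2\times I$ condition are all inherited — the last because a $T^2 \times I$ piece upstairs would cover a $T^2 \times I$ or a twisted $I$-bundle over the Klein bottle downstairs, both excluded) and that its JSJ-decomposition is the preimage of the original, so the new JSJ-graph is exactly $T/\pi'$.

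The main obstacle I anticipate is step (3): making the separability argument actually yield girth $\geq 3$ rather than merely "no loops." Killing loops only requires separating each edge group in the adjacent vertex groups, which is comparatively soft; ruling out bigons requires controlling pairs of edges and their endpoint-stabilizer double cosets simultaneously, and one must be careful that the finite quotient detecting one bad configuration does not create new ones. I would handle this by the standard two-stage (or finitely-many-stage) argument: there are only finitely many $\pi_1(M)$-orbits of "potential short cycles" up to the relevant equivalence, each is detected by a homomorphism to a finite group, and one takes the common refinement; the normality of $\pi'$ ensures the quotient graph is a genuine regular covering of $\Gamma$ so that the local structure is uniform and no short cycle survives. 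The subtlety is purely in verifying that the separability statements one needs are exactly the LERF-type statements known for 3-manifold groups — which they are, given that edge and vertex groups of the JSJ-decomposition are themselves 3-manifold subgroups.
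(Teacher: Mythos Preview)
Your approach is workable in spirit but takes a far heavier route than the paper, and the part you yourself flag as ``the main obstacle'' (killing bigons via separability) is left at the level of an outline rather than an argument.

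The paper's proof is essentially a one-line reduction. There is a canonical map $M\to\Gamma(M)$ inducing an \emph{epimorphism} $\pi_1(M)\twoheadrightarrow\pi_1(\Gamma(M))$ onto a free group. Hence any finite cover $\widetilde{\Gamma}\to\Gamma(M)$ of the graph pulls back to a finite cover $\widetilde{M}\to M$, and because $M$ is clean the JSJ-decomposition lifts, so $\Gamma(\widetilde{M})=\widetilde{\Gamma}$. This reduces Proposition~\ref{prop:JSJ-girth} to the purely combinatorial Lemma~\ref{lem: girth}: every finite graph has a finite cover of girth $\geq 3$. That lemma is proved by choosing a class $\phi\in H^1(\Gamma;\Z)$ nonvanishing on all cycles and passing to a $\Z_p$-cover for a suitable prime $p\geq 3$. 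No separability of subgroups of $\pi_1(M)$, no LERF, no Agol--Wise, no double cosets.

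By contrast you work inside $\pi_1(M)$ and invoke LERF for 3-manifold groups to separate edge and vertex groups. This is a vastly stronger input than required: you are using the full virtual specialness machinery to accomplish something that follows from the surjection onto the free group $\pi_1(\Gamma(M))$ and elementary graph theory. Moreover, your step~(3) is not actually carried out. Separability of vertex groups does kill loops (if $t$ is a stable letter for a loop at $v$, then $t\notin G_v$ and separability gives a finite quotient with $t\notin \pi' G_v$), but for bigons the condition you need is genuinely about double cosets of vertex groups, and ``take a common refinement of finitely many finite quotients'' does not by itself explain why no new bigons are created in the process. You gesture at this but do not resolve it. The paper sidesteps the issue entirely by never leaving the free group $\pi_1(\Gamma(M))$, where covers are controlled by graph combinatorics alone.

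In short: the decisive observation you are missing is that it suffices to take covers of $M$ pulled back from covers of the graph $\Gamma(M)$; once you see that, the proposition becomes elementary and the 3-manifold content reduces to the single fact that cleanness makes the JSJ-decomposition lift.
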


Since the map $\pi\colon M\to \Gamma(M)$ induces an epimorphism $\pi_{*}\colon \pi_1(M) \to \pi_1(\Gamma(M))$, any finite cover $\widetilde{\Gamma}$ of the graph $\Gamma(M)$ induces a finite cover 
$\widetilde{M}$ of $M$. Since $M$ is clean it follows from \cite[Theorem~1.9.3]{AFW15} that  the JSJ-decomposition of $M$ lifts to the JSJ-decomposition of $\widetilde{M}$ and thus $\Gamma(\widetilde{M}) = \widetilde{\Gamma}$. 
Therefore Proposition~\ref{prop:JSJ-girth} follows from the following lemma: 

\begin{lemma}\label{lem: girth}
Any finite graph admits a finite cover with girth $\geq 3$.
\end{lemma}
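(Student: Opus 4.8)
The plan is to exhibit one explicit finite regular cover and check that it works. Recall that a connected graph has girth $\geq 3$ precisely when it is a simple graph, i.e.\ it has no loops (edges with a single endpoint) and no two distinct edges with the same pair of endpoints. We may assume $G$ is connected (otherwise treat each connected component separately) and that $G$ is not a tree (trees have girth $\infty$), so that $\pi_1(G)$ is free of rank $r:=\beta_1(G)\geq 1$.

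The cover I would use is the regular covering $\widehat G\to G$ corresponding to the kernel of the natural surjection $\pi_1(G)\onto H_1(G;\Z/3)\cong(\Z/3)^r$; equivalently, fixing a spanning tree $T\subset G$ and orienting the edges, $\widehat G$ has vertex set $V(G)\times(\Z/3)^r$, and each edge $e$ of $G$ from $a$ to $b$ lifts, for each $\alpha\in(\Z/3)^r$, to an edge from $(a,\alpha)$ to $(b,\alpha+w(e))$, where $w(e)=0$ if $e\in T$ and $w(e)$ is the $i$-th standard basis vector if $e$ is the $i$-th edge not in $T$. This is a finite covering of degree $3^r$, and it is connected because the values $w(e)$ of the non-tree edges generate $(\Z/3)^r$.

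It then remains to check that $\widehat G$ is simple. First, two edges of $\widehat G$ sharing a pair of endpoints $\{(a,\alpha),(b,\alpha')\}$ necessarily lie over edges of $G$ joining $\{a,b\}$, so the only configurations that could produce a loop or a double edge upstairs come from (i) a loop of $G$, (ii) two distinct loops of $G$ at one vertex, or (iii) two distinct edges of $G$ between two distinct vertices; note that loops of $G$ never lie in $T$, so in all of these $w$ takes a nonzero value on the relevant edges. Unwinding the definition: a lift of an edge $e$ is a loop only if $w(e)=0$; two lifts of parallel edges or loops $e_1,e_2$ (possibly $e_1=e_2$) share both endpoints only if $w(e_1)=w(e_2)$ or $w(e_1)=-w(e_2)$, where in the case $e_1=e_2$ the first possibility is vacuous and the second degenerates to $2w(e_1)=0$. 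Since distinct edges of $G$ carry $w$-values that are distinct standard basis vectors or $0$, and in $(\Z/3)^r$ the equation $x=-y$ for $x,y$ standard basis vectors or $0$ forces $x=y=0$ while $2x=0$ forces $x=0$, every one of these is excluded. Hence $\widehat G$ has neither a loop nor a double edge, so its girth is $\geq 3$.

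The one genuine subtlety — and the reason an odd modulus such as $3$ must be used rather than $2$ — is case (i): over a loop $e$ of $G$ the two lifts "based at $\alpha$'' and "based at $\alpha+w(e)$'' have the same endpoint pair exactly when $2w(e)=0$, so with $\Z/2$-coefficients double edges would reappear (for instance the $(\Z/2)^2$-cover of the figure-eight graph is a square with every edge doubled). Keeping track of this, and of the analogous reversed identification $w(e_1)=-w(e_2)$ for a pair of parallel edges, is the main point; the rest is a routine verification from the explicit description of $\widehat G\to G$.
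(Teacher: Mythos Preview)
Your proof is correct and takes a genuinely different route from the paper's. The paper constructs a \emph{cyclic} cover: it chooses a single cohomology class $\phi\in H^1(\Gamma;\Z)$ that is nonzero on the homology class of every embedded cycle, reduces modulo a suitable prime $p\geq 3$, and argues that in the resulting $\Z/p$-cover every cycle of $\Gamma$ unwinds to length $\geq p$. You instead pass to the full mod-$3$ abelian cover, of degree $3^{b_1(G)}$, and verify simplicity directly from the explicit edge-labelling by standard basis vectors of $(\Z/3)^r$.

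The trade-off is that the paper's cover has much smaller degree, while your construction is more explicit and your case analysis more complete. You correctly isolate the subtle point: a short cycle upstairs need not project to an embedded cycle downstairs --- a bigon in the cover can lie over a single loop traversed both ways, or over two distinct loops at one vertex --- and your labelling by independent basis vectors in an odd-order group rules out all such configurations simultaneously (this is exactly where working modulo $2$ would fail, as you note). The paper's argument, by contrast, only explicitly tracks preimages of embedded cycles of $\Gamma$; your treatment of the loop-over-loop cases is therefore the more careful of the two.
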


\begin{proof}
 Let $\Gamma$ be a finite graph. Let assume that the set $\mathcal{C}$ of cycles in $\Gamma$ is non empty. Let $C \subset H_1(\Gamma; \mathbb{Z})$ be 
 the finite  set of classes corresponding to the cycles in $\mathcal{C}$. (In our usage of the word ``cycle'' we mean that cycles are injective, hence the number of cycles is indeed finite.) There is a cohomology class $\phi \in H^1(M;\Z)=\hom(\pi_1(M),\Z)$ which does not vanish on any of  the elements in $C$, since they are only finitely many. Choose a prime number $p \geq 3$ which does not divide $\Pi_{ c \in C} \phi(c)$. Let $\phi_p\colon \pi_1(\Gamma) \to \mathbb{Z} \to \mathbb{Z}_p$ be the epimorphism given by the composition of $\phi$ with the projection map $\Z\to \Z_p$ and denote by $\widetilde \Gamma$ the corresponding $p$-sheeted cover. The choice of $p$ implies that the pullback of each cycle in  $\mathcal{C}$ has length at least $p \geq 3$. It follows that $\widetilde \Gamma$ has girth $\geq 3$. 
\end{proof}

\subsection{The profinite completion of a group}\label{section:definition-profinite}
In this section we recall several basic properties of profinite completions. Throughout this section we refer to \cite{RZ10} for details.
Given a group $\pi$  we consider the inverse system
$\{\pi/\G\}_{\G}$ where $\G$ runs over all finite-index normal subgroups of $\pi$. 
The  profinite completion $\what{\pi}$ of $\pi$ is then defined as the inverse limit
of this system, i.e.\
\[ \what{\pi}=\underset{\longleftarrow}{\lim} \,\pi/\G. \]
Note that the natural map $\pi\to \what{\pi}$ is injective if and only if  $\pi$ is residually finite. It follows from  \cite{He87} and the proof of the Geometrization Conjecture that  fundamental groups of 3-manifolds are residually finite. We also point out that any group homomorphism $f\colon A\to B$ induces a homomorphism of profinite completions $\what{f}\colon \what{A}\to \what{B}$. 

The following lemma is a  consequence of a deep result of Nikolov and Segal~\cite{NS07}. We refer to \cite[Proposition~3.2.2 (a) and Theorem~4.2.2]{RZ10} for details.

\begin{lemma}\label{lem:samefinitequotients-profinite}
Let $\pi$ be a finitely generated group. Then for every finite group $G$ the map $\pi\to \what{\pi}$ induces a bijection
$\hom(\what{\pi},G)\to \hom(\pi,G)$. 
\end{lemma}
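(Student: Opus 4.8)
The plan is to combine two inputs: the defining universal property of $\what{\pi}$, and the theorem of Nikolov--Segal \cite{NS07} that in a topologically finitely generated profinite group every finite-index subgroup is open. Throughout, write $\iota\colon \pi\to\what{\pi}$ for the canonical homomorphism, so that the map in question is $\hom(\what{\pi},G)\to\hom(\pi,G)$, $\psi\mapsto\psi\circ\iota$. Here $\hom(-,-)$ will mean \emph{all} abstract group homomorphisms; the point of the lemma is that, for finite $G$, no homomorphism $\what{\pi}\to G$ is lost or gained relative to $\hom(\pi,G)$, and the Nikolov--Segal input is exactly what is needed to rule out non-continuous homomorphisms out of $\what{\pi}$.

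For surjectivity I would argue as follows. Given $\phi\in\hom(\pi,G)$, the subgroup $\ker\phi$ has index at most $|G|$ in $\pi$, so its normal core $N:=\bigcap_{g\in\pi} g(\ker\phi)g^{-1}$ is a finite-index normal subgroup of $\pi$ contained in $\ker\phi$. Hence $\phi$ factors as $\pi\xrightarrow{q}\pi/N\xrightarrow{\overline{\phi}}G$. Since $N$ occurs in the inverse system defining $\what{\pi}$, there is a canonical continuous projection $p_N\colon\what{\pi}\to\pi/N$ with $p_N\circ\iota=q$, and then $\psi:=\overline{\phi}\circ p_N$ lies in $\hom(\what{\pi},G)$ and satisfies $\psi\circ\iota=\phi$.

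For injectivity, suppose $\psi_1,\psi_2\in\hom(\what{\pi},G)$ agree after precomposition with $\iota$. Because $\pi$ is finitely generated, the dense subgroup $\iota(\pi)$ contains a finite subset that topologically generates $\what{\pi}$, so $\what{\pi}$ is topologically finitely generated; by Nikolov--Segal every finite-index subgroup of $\what{\pi}$ is open. Applying this to $\ker\psi_i$ (which has index at most $|G|$) shows that $\psi_1$ and $\psi_2$ are continuous. The equalizer $\{x\in\what{\pi}:\psi_1(x)=\psi_2(x)\}$ is then a closed subset containing the dense subgroup $\iota(\pi)$, hence it is all of $\what{\pi}$, i.e.\ $\psi_1=\psi_2$.

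I expect the only genuine obstacle to be the appeal to the Nikolov--Segal theorem in the injectivity step (and implicitly in the identification of $\hom(\what{\pi},G)$ with \emph{continuous} homomorphisms); everything else is bookkeeping with finite quotients and density. Since this is precisely what \cite[Proposition~3.2.2(a) and Theorem~4.2.2]{RZ10} record, in the final write-up one can simply cite those statements rather than reproving them.
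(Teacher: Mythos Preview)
Your argument is correct and is precisely the unpacking of the references the paper cites: the paper gives no proof beyond pointing to Nikolov--Segal \cite{NS07} and \cite[Proposition~3.2.2(a), Theorem~4.2.2]{RZ10}, and your surjectivity-via-universal-property plus injectivity-via-continuity-from-Nikolov--Segal is exactly what those references record. There is nothing to add.
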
 

For the most part we will be interested in the following corollary.

\begin{lemma}\label{lem:samefinitequotients}
Let $f\colon A\to B$ be a group homomorphism of finitely generated groups that induces an isomorphism of profinite completions. Then for any finite group $G$ the map
\[ \ba{rcl} \{\mbox{epimorphisms from $B$ onto $G$}\}&\to &\{\mbox{epimorphisms from $A$ onto $G$}\}\\
\beta&\mapsto & \beta\circ f\ea\]
is a bijection.
\end{lemma}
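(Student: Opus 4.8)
The statement to prove is Lemma~\ref{lem:samefinitequotients}: if $f\colon A\to B$ induces an isomorphism $\what f\colon \what A\to \what B$ of profinite completions, then for every finite group $G$ the assignment $\beta\mapsto \beta\circ f$ is a bijection from epimorphisms $B\onto G$ to epimorphisms $A\onto G$.

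The plan is to deduce this directly from Lemma~\ref{lem:samefinitequotients-profinite}. First I would set up the commutative diagram: the natural maps $A\to\what A$ and $B\to\what B$ fit into a square with $f$ and $\what f$, which commutes by naturality of profinite completion. Applying $\hom(-,G)$ to this square gives a commutative diagram of sets in which, by Lemma~\ref{lem:samefinitequotients-profinite} applied to the finitely generated groups $A$ and $B$, the two vertical arrows $\hom(\what A,G)\to\hom(A,G)$ and $\hom(\what B,G)\to\hom(B,G)$ are bijections. Since $\what f$ is an isomorphism, the induced map $\hom(\what B,G)\to\hom(\what A,G)$, $\gamma\mapsto\gamma\circ\what f$, is a bijection as well. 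Chasing the square, the composite map $\hom(B,G)\to\hom(A,G)$, $\beta\mapsto\beta\circ f$, is therefore a bijection, being a composite of three bijections.

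It then remains to check that this bijection on all homomorphisms restricts to a bijection on \emph{epimorphisms}. Here I would argue that surjectivity onto $G$ is detected by the image, which is a finite quotient group of $A$ (resp.\ $B$); concretely, $\beta\colon B\to G$ is onto iff the corresponding $\gamma\colon\what B\to G$ is onto (a homomorphism from a group is surjective iff the corresponding homomorphism from its profinite completion is surjective, since the image of $\pi$ is dense in $\what\pi$ and $G$ is finite hence discrete), and likewise $\beta\circ f$ is onto iff $\gamma\circ\what f$ is onto; since $\what f$ is an isomorphism, $\gamma$ is onto iff $\gamma\circ\what f$ is onto. Thus under the bijection of the previous paragraph the subset of epimorphisms on the $B$-side corresponds exactly to the subset of epimorphisms on the $A$-side, and restricting a bijection to corresponding subsets yields a bijection.

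I do not anticipate a serious obstacle: the whole argument is a diagram chase plus the elementary density observation about epimorphisms. The only point requiring mild care is the correct bookkeeping of which maps are "precompose with $f$" versus "precompose with $\what f$" and verifying the square commutes on the nose, which is immediate from the universal property; and one should state explicitly that Lemma~\ref{lem:samefinitequotients-profinite} applies because finite generation of $A$ and $B$ is part of the hypothesis.
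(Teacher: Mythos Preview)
Your proposal is correct and is precisely the intended derivation: the paper does not write out a proof of this lemma at all, but simply presents it as a corollary of Lemma~\ref{lem:samefinitequotients-profinite}, and your diagram chase together with the density argument for preserving surjectivity is exactly how one makes that implication explicit.
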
 

The following lemma follows from  \cite[Proposition~3.2.2 (d) and Theorem~4.2.2]{RZ10}.

\begin{lemma}\label{lem:same-finite-index-subgroups}
If $f\colon A\to B$ is a homomorphism between  two finitely generated  groups 
that induces an isomorphism of  profinite completions, then for any finite-index subgroup $\Gamma$  of $B$ $($not necessarily normal$)$ the map
\[ A/f^{-1}(\Gamma)\,\,\to\,\,B/\Gamma\]
is a bijection.
\end{lemma}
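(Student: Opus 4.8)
The plan is to transport everything to the profinite completions, where the hypothesis that $\what{f}$ is an isomorphism can be applied to \emph{open} subgroups, and then to come back down.

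First I would record the standard dictionary between finite-index subgroups of a finitely generated group $G$ and open subgroups of $\what{G}$, which is exactly \cite[Proposition~3.2.2 and Theorem~4.2.2]{RZ10}: for a finite-index subgroup $H\le G$ the closure $\overline{H}$ in $\what{G}$ is open, one has $\overline{H}\cap G=H$, the index is preserved, i.e.\ $[\what{G}:\overline{H}]=[G:H]$, and the natural map of left $G$-sets $G/H\to \what{G}/\overline{H}$ is a bijection. When $H$ is not normal, the only extra point is to pass to the normal core $K=\bigcap_{g\in G}gHg^{-1}$, which is finite-index normal, observe that $\overline{K}$ is open normal with $\what{G}/\overline{K}\cong G/K$ (since the image of $G$ in the profinite group $\what{G}/\overline{K}$ is the finite, hence closed and dense, subgroup $G/(\overline{K}\cap G)=G/K$), and then deduce the statements for $H\supseteq K$ from those for $K$. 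Bijectivity of $G/H\to\what{G}/\overline{H}$ also follows directly: $G$ acts transitively on the finite discrete set $\what{G}/\overline{H}$ because the $G$-orbit of the trivial coset is dense, and the stabiliser of that coset is $G\cap\overline{H}=H$.

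Now set $\Lambda:=\what{f}^{-1}(\overline{\Gamma})\le\what{A}$. Since $\what{f}$ is an isomorphism, $\Lambda$ is open, $[\what{A}:\Lambda]=[\what{B}:\overline{\Gamma}]=[B:\Gamma]$, and $\what{f}$ induces a bijection $\what{A}/\Lambda\to\what{B}/\overline{\Gamma}$. I claim $A\cap\Lambda=f^{-1}(\Gamma)$: an element $a\in A$ lies in $\Lambda$ iff $f(a)=\what{f}(a)\in\overline{\Gamma}$, i.e.\ iff $f(a)\in\overline{\Gamma}\cap B=\Gamma$. In particular $f^{-1}(\Gamma)$ has finite index in $A$ (at most $[\what{A}:\Lambda]$, as $A$ maps onto the finite set $\what{A}/\Lambda$), so the dictionary applies to $f^{-1}(\Gamma)=A\cap\Lambda\le A$, and one checks $\overline{f^{-1}(\Gamma)}=\Lambda$, whence $A/f^{-1}(\Gamma)\to\what{A}/\Lambda$ is a bijection. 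Chaining the three bijections
\[ A/f^{-1}(\Gamma)\;\xrightarrow{\ \sim\ }\;\what{A}/\Lambda\;\xrightarrow{\ \sim\ }\;\what{B}/\overline{\Gamma}\;\xleftarrow{\ \sim\ }\;B/\Gamma \]
and noting that the resulting bijection $A/f^{-1}(\Gamma)\to B/\Gamma$ sends $a\,f^{-1}(\Gamma)$ to $f(a)\,\Gamma$ (the square commutes, since along both routes $a\,f^{-1}(\Gamma)$ maps to $f(a)\,\overline{\Gamma}$) identifies it with the map in the statement, which is therefore a bijection.

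The content is all in the first paragraph, and the only real obstacle is bookkeeping: verifying that every item of the dictionary survives the passage to normal cores so that the ``not necessarily normal'' clause is legitimate, and keeping the various identifications $G$- (resp.\ $A$-, $B$-) equivariant so that the final composite is genuinely the natural coset map rather than merely some abstract bijection of finite sets.
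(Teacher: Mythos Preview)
Your argument is correct and is precisely the unpacking of the citation the paper gives: the lemma is stated there as an immediate consequence of \cite[Proposition~3.2.2(d) and Theorem~4.2.2]{RZ10}, and your ``dictionary'' paragraph together with the transport along $\what{f}$ is exactly how those references combine to yield the statement. One small remark: your proof never actually invokes the Nikolov--Segal direction (that every open subgroup of $\what{G}$ arises from a finite-index subgroup of $G$); the density of $A$ in $\what{A}$ already gives $\overline{A\cap\Lambda}=\Lambda$, so the appeal to Theorem~4.2.2 is not strictly needed here.
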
 

\begin{lemma}\label{lem:sameh1}
If $f\colon A\to B$ is a homomorphism between  two finitely generated residually finite groups 
that induces an isomorphism of  profinite completions, then the induced map on homology $f_*\colon H_1(A;\Z) \to H_1(B;\Z)$ is an isomorphism.
\end{lemma}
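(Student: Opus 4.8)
The plan is to exploit the fact that $H_1(-;\Z)$ is the abelianization, hence a functor that factors through all finite cyclic (indeed all finite abelian) quotients, and then use the profinite isomorphism hypothesis together with Lemmas~\ref{lem:samefinitequotients} and~\ref{lem:same-finite-index-subgroups} to pin down $H_1(A;\Z)$ and $H_1(B;\Z)$.

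First I would reduce to the statement that $f_*\colon A^{\mathrm{ab}}\to B^{\mathrm{ab}}$ is an isomorphism. Since $A$ and $B$ are finitely generated, so are $A^{\mathrm{ab}}=H_1(A;\Z)$ and $B^{\mathrm{ab}}=H_1(B;\Z)$; by the structure theorem each is a direct sum of a free abelian group of some rank and a finite abelian group. A finitely generated abelian group is determined up to isomorphism by its finite quotients: indeed, for each $n$ the number of surjections onto $\Z/n$ determines the free rank, and the collection of all finite quotients determines the torsion subgroup. So it suffices to show that $f_*$ induces a bijection on $\hom(-,G)$ for every finite abelian group $G$ — and more: that it is actually an isomorphism, not just that source and target are abstractly isomorphic.

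The key step is therefore to show that for a finitely generated abelian group, a homomorphism that induces a bijection on $\hom(-,G)$ for all finite $G$ is an isomorphism. For surjectivity: if $\operatorname{coker}(f_*)$ were non-trivial, being a finitely generated abelian group it would admit a non-trivial finite quotient $G$, giving a homomorphism $B^{\mathrm{ab}}\to G$ that kills the image of $f_*$; the zero map $A^{\mathrm{ab}}\to G$ also pulls back from this, contradicting injectivity of $\beta\mapsto\beta\circ f_*$ on $\hom(-,G)$ unless that nonzero map equals the zero map. Hence $f_*$ is onto. For injectivity: apply Lemma~\ref{lem:same-finite-index-subgroups} (or Lemma~\ref{lem:samefinitequotients} with $G$ ranging over finite cyclic groups) to conclude that $f_*$ induces a bijection $A/f_*^{-1}(\Gamma)\to B^{\mathrm{ab}}/\Gamma$ for every finite-index $\Gamma\le B^{\mathrm{ab}}$; combined with surjectivity this forces $\ker(f_*)$ to lie in every finite-index subgroup of $A^{\mathrm{ab}}$, hence in the intersection of all of them, which is trivial because finitely generated abelian groups are residually finite. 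Concretely I would phrase this as: $f_*$ is surjective, and $\#(A^{\mathrm{ab}}/nA^{\mathrm{ab}}) = \#(B^{\mathrm{ab}}/nB^{\mathrm{ab}})$ for all $n$ by Lemma~\ref{lem:same-finite-index-subgroups}, which by counting forces $\ker(f_*)\otimes\Z/n = 0$ and $\ker(f_*)$ finitely generated, hence $\ker f_* = 0$.

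The main obstacle is really just bookkeeping: making sure one deduces an honest isomorphism of the groups $H_1(A;\Z)\to H_1(B;\Z)$ rather than merely ``$H_1(A;\Z)\cong H_1(B;\Z)$ abstractly''. The cleanest route is to establish surjectivity of $f_*$ first (via the cokernel argument above, which only uses the bijection on $\hom(-,G)$ for finite cyclic $G$, available from Lemma~\ref{lem:samefinitequotients}), and only then kill the kernel using the index-counting of Lemma~\ref{lem:same-finite-index-subgroups}. Residual finiteness of $A$ and $B$ enters exactly to guarantee that finitely generated abelian quotients have enough finite quotients, which is automatic for abelian groups but is the reason the hypothesis is stated; no deep input beyond the already-quoted lemmas and the structure theorem for finitely generated abelian groups is needed.
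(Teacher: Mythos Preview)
Your surjectivity argument is exactly the paper's: use Lemma~\ref{lem:samefinitequotients} to see that $f$ surjects onto every finite abelian quotient of $B$, and then invoke the classification of finitely generated abelian groups to conclude that $f_*\colon H_1(A;\Z)\to H_1(B;\Z)$ is an epimorphism. In fact the paper's proof \emph{stops there}: it concludes only ``epimorphism'', even though the lemma as stated asserts an isomorphism. (The lemma is never cited elsewhere in the paper, so the discrepancy has no downstream effect.) Your argument therefore goes further than the paper's by supplying the injectivity step.

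Your injectivity idea is correct, but the bookkeeping needs a tweak. From Lemma~\ref{lem:same-finite-index-subgroups} you only get $\ker(f_*)\subset f_*^{-1}(\Gamma)$ for finite-index $\Gamma\le B^{\mathrm{ab}}$, and these subgroups are precisely the finite-index subgroups of $A^{\mathrm{ab}}$ that already \emph{contain} $\ker(f_*)$; so that version is circular as written. The clean route is the one you call ``concrete'', but sourced from Lemma~\ref{lem:samefinitequotients} rather than Lemma~\ref{lem:same-finite-index-subgroups}: the bijection $\hom(B,G)\to\hom(A,G)$ for every finite abelian $G$ means that every homomorphism $A^{\mathrm{ab}}\to G$ factors through $f_*$, so $\ker(f_*)$ lies in the kernel of every map from $A^{\mathrm{ab}}$ to a finite group, hence in $\bigcap_n nA^{\mathrm{ab}}=0$. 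Equivalently, $|A^{\mathrm{ab}}/nA^{\mathrm{ab}}|=|\hom(A,\Z/n)|=|\hom(B,\Z/n)|=|B^{\mathrm{ab}}/nB^{\mathrm{ab}}|$ for all $n$, so the surjection $A^{\mathrm{ab}}/nA^{\mathrm{ab}}\twoheadrightarrow B^{\mathrm{ab}}/nB^{\mathrm{ab}}$ is a bijection, forcing $\ker(f_*)\subset nA^{\mathrm{ab}}$ for all $n$.
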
 

\begin{proof}
It follows from Lemma~\ref{lem:samefinitequotients} that the map $f$ induces an epimorphism from $A$ onto every finite abelian quotient of $B$. It follows easily from the classification of finitely generated abelian groups that $f_*\colon H_1(A;\Z)\to H_1(B;\Z)$ is an epimorphism.
\end{proof}

\begin{lemma}\label{lem:virtiso}
Let $f\co A\to B$ be a group homomorphism of finitely generated groups and let  $\b\colon B\to G$ be an epimorphism onto a finite group. If $\what{f}\colon \what{A}\to \what{B}$ is an isomorphism, then $\what{f}$ restricts to an isomorphism
\[ \what{\ker(\b\circ f)}\to \what{\ker(\b)}.\]
\end{lemma}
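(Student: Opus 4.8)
### Proof proposal for Lemma~\ref{lem:virtiso}

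The plan is to compare the two obvious fibre-product (commutative square) diagrams, one at the discrete level and one at the profinite level, and to use the fact that profinite completion commutes with passage to a finite-index subgroup once we know its preimage behaves correctly. Write $K_B=\ker(\b)$ and $K_A=\ker(\b\circ f)$, so that $K_A=f^{-1}(K_B)$ and $f$ restricts to a homomorphism $f_0\co K_A\to K_B$. First I would record that, because $\what f\co\what A\to\what B$ is an isomorphism, Lemma~\ref{lem:same-finite-index-subgroups} applied to the finite-index (normal) subgroup $K_B$ gives that $f$ induces a bijection $A/K_A\to B/K_B$; since $\b\circ f$ is automatically surjective (by Lemma~\ref{lem:samefinitequotients} the epimorphism $\b$ pulls back to an epimorphism $\b\circ f$), both quotients are canonically identified with $G$. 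So we have a commutative diagram with exact rows
\[
\begin{array}{ccccccccc}
1&\to&K_A&\to&A&\to&G&\to&1\\
 & &\downarrow f_0& &\downarrow f& &\| & & \\
1&\to&K_B&\to&B&\to&G&\to&1.
\end{array}
\]

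Next I would profinite-complete everything. Profinite completion is right exact (it preserves surjections and cokernels), so applying $\what{(-)}$ to the two rows yields exact sequences $\what{K_A}\to\what A\to G\to 1$ and $\what{K_B}\to\what B\to G\to 1$; here I use Lemma~\ref{lem:samefinitequotients-profinite} to identify $\what G$ with $G$ and to see that the composite $\what A\to G$ is still the map induced by $\b\circ f$. The subtle point is that the image of $\what{K_A}$ in $\what A$ is precisely the kernel of $\what A\to G$, and likewise for $B$: this is where the deep input enters. Concretely, the closure of $K_A$ in $\what A$ is an open normal subgroup, it is contained in $\ker(\what A\to G)$, and conversely any open normal subgroup of $\what A$ corresponds to a finite-index normal subgroup of $A$ (again by Nikolov--Segal, via Lemma~\ref{lem:samefinitequotients-profinite}); comparing indices shows the closure of $K_A$ equals $\ker(\what A\to G)$, and similarly the natural map $\what{K_A}\to\overline{K_A}$ is an isomorphism because $K_A$ is finitely generated and the finite-index subgroups of $K_A$ are cofinal among (intersections with $K_A$ of) finite-index subgroups of $A$ — this is exactly \cite[Lemma~3.2.1 / Proposition~3.2.5]{RZ10} (the statement that for $\Gamma\le_f \pi$ one has $\what\Gamma=\overline\Gamma\le\what\pi$). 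I would cite these facts from \cite{RZ10} rather than reprove them.

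Granting that, we obtain a commutative diagram with exact rows
\[
\begin{array}{ccccccccc}
1&\to&\what{K_A}&\to&\what A&\to&G&\to&1\\
 & &\downarrow \what{f_0}& &\downarrow \what f& &\|& & \\
1&\to&\what{K_B}&\to&\what B&\to&G&\to&1,
\end{array}
\]
and now the conclusion is a diagram chase: $\what f$ is an isomorphism by hypothesis and the right-hand vertical map is the identity, so the five lemma (or a direct argument: $\what{f_0}$ is injective because $\what f$ is, and surjective because $\what f$ is surjective and maps $\what{K_A}$ into $\what{K_B}$ compatibly with the surjections onto $G$) forces $\what{f_0}\co\what{K_A}\to\what{K_B}$ to be an isomorphism. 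Finally I would observe that $\what{f_0}$ is the restriction of $\what f$ under the identifications $\what{K_A}=\overline{K_A}\subset\what A$ and $\what{K_B}=\overline{K_B}\subset\what B$, which is the assertion of the lemma.

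The main obstacle is the middle paragraph: one must be careful that for a finitely generated (but not necessarily finitely presented, and not necessarily with nice subgroup structure) group $\pi$ and a finite-index subgroup $\Gamma$, the inclusion really does induce an isomorphism $\what\Gamma\xrightarrow{\ \sim\ }\overline\Gamma\subset\what\pi$, and that the profinite topology on $\pi$ restricts to the profinite topology on $\Gamma$. This is standard but rests on the cofinality of the relevant systems of finite-index subgroups; everything else (right-exactness of completion, the five lemma, identification of $\what G$ with $G$) is formal. I would therefore structure the write-up so that these topological facts about finite-index subgroups are quoted precisely from \cite{RZ10}, and the rest is the two-row diagram chase above.
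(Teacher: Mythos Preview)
Your argument is correct and follows essentially the same route as the paper: both proofs reduce to the commutative diagram of short exact sequences $1\to\what{K_A}\to\what A\to G\to 1$ over $1\to\what{K_B}\to\what B\to G\to 1$ and conclude by a five-lemma/diagram chase. The paper is simply terser about the ``subtle point'' you flag, stating up front that for any finitely generated $C$ and epimorphism $\gamma\colon C\to G$ onto a finite group one has $\what{\ker\gamma}=\ker(\what C\to G)$, and then proceeding directly; your more explicit discussion of cofinality and the identification $\what\Gamma\cong\overline\Gamma$ for finite-index $\Gamma$ is exactly what underlies that statement.
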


\begin{proof}
First note that if $C$ is a finitely generated groups and if  
$\gamma\colon C\to G$ is an epimorphism onto a finite group, then $ \what{\ker(\gamma)}=\ker(\gamma\colon \what{C}\to G)$.
Now let $\b\colon B\to G$ be an epimorphism onto a finite group. 
We denote by $\beta\colon \what{B}\to G$ the corresponding homomorphism.

We write $\a=\b\circ f\colon A\to G$. 
It follows from Lemma~\ref{lem:samefinitequotients} that $\alpha\colon A\to G$ is  an epimorphism and that the induced map $\alpha\colon \what{A}\to G$ is also an epimorphism.
We  obtain the following commutative diagram
\[ \xymatrix@R0.5cm{ 1\ar[r] & \ker(\a\colon \what{A}\to G)\ar[d]^-f\ar[r]&\what{A}\ar[d]^-f\ar[r]^\alpha& G\ar[d]\ar[r]& 1\\
1\ar[r] & \ker(\a\colon \what{B}\to G)\ar[r]&\what{B}\ar[r]^\beta& G\ar[r]& 1.}\]
Since the map in the middle is an isomorphism and the map on the right is an isomorphism it follows that the map on the left is also an isomorphism. The lemma follows from the fact, mentioned at the beginning of the proof, that the groups on the left agree with the profinite completions of the kernels of $\a\colon A\to G$ and $\b\colon B\to G$.
\end{proof} 

We conclude this section with the following elementary lemma.

\begin{lemma}\label{lem:virtiso-2}
Let $f\co A\to B$ be a group homomorphism and let  $\b\colon B\to G$ be an epimorphism onto a finite group. Then the following are equivalent:
\bn
\item $f$ is an isomorphism,
\item $ \beta\circ f\colon A\to G$ is an epimorphism and the restriction of $f$ to $\ker(\beta\circ f)\to \ker(\beta)$ is an isomorphism.
\en
\end{lemma}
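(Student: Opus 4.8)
The plan is to establish the two implications separately; all the content lies in $(2)\Rightarrow(1)$, and even that is purely formal.

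For $(1)\Rightarrow(2)$ I would argue directly: if $f$ is an isomorphism then $\b\circ f$ is the composite of an isomorphism with the epimorphism $\b$, hence an epimorphism, and $f$ maps $\ker(\b\circ f)=f^{-1}(\ker\b)$ bijectively onto $\ker\b$ because $f$ is bijective, the restriction of an injective homomorphism being again injective.

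For $(2)\Rightarrow(1)$ I would form the commutative diagram of short exact sequences
\[ \xymatrix@R0.4cm{ 1\ar[r] & \ker(\b\circ f)\ar[d]^-f\ar[r]& A\ar[d]^-f\ar[r]^-{\b\circ f}& G\ar[d]^-{\id}\ar[r]& 1\\ 1\ar[r] & \ker(\b)\ar[r]& B\ar[r]^-{\b}& G\ar[r]& 1,}\]
whose rows are exact precisely because $\b$ and $\b\circ f$ are surjective (the latter by the hypothesis in (2)), and then conclude that the middle vertical map is an isomorphism because the outer two are. This is the short five lemma, which holds for arbitrary (not necessarily abelian) groups; rather than invoke it I would spell out the one-line chase in each direction. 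Injectivity: if $f(a)=e$ then $(\b\circ f)(a)=e$, so $a\in\ker(\b\circ f)$, on which $f$ is injective, whence $a=e$. Surjectivity: given $b\in B$, choose $a\in A$ with $(\b\circ f)(a)=\b(b)$ --- possible since $\b\circ f$ is onto --- so that $f(a)^{-1}b\in\ker\b$; write $f(a)^{-1}b=f(a')$ with $a'\in\ker(\b\circ f)$ --- possible since $f$ restricts to a surjection onto $\ker\b$ --- and then $f(aa')=b$.

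I do not expect any genuine obstacle here: the lemma is a bookkeeping statement, and the only point requiring a moment's care is that the classical five lemma is usually quoted for abelian groups, whereas here $A$, $B$ and the two kernels may be nonabelian; the explicit chase above sidesteps that entirely.
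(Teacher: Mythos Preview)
Your proof is correct. The paper does not actually supply a proof of this lemma: it is introduced as an ``elementary lemma'' and left without argument. The short-five-lemma diagram chase you give is exactly the standard verification one would write down, and your care in spelling out the nonabelian chase (rather than citing the abelian five lemma) is appropriate.
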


%

\subsection{Graphs of groups and the JSJ-decomposition of a 3-manifold}
A graph of groups $\GG$ consists of an oriented graph $\Gamma(\GG)=(V(\GG),E(\GG))$ together with the following data:
\bn
\item for each vertex $v\in V(\GG)$  we have a group $\pi_v$,
\item for each edge $e\in V(\GG)$  we have a group $\pi_e$ such that $\pi_e=\pi_{\ol{e}}$, where $\ol{e}$ is the edge $e$ with the opposite orientation.
\item for each edge $e$ we have group monomorphisms $\alpha_e\colon \pi_e\to \pi_{i(e)}$ and  $\beta_e\colon \pi_e\to \pi_{t(e)}$ with $\alpha_e=\beta_{\ol{e}}$ and 
 $\alpha_{\ol{e}}=\beta_{{e}}$, where $i(e)$ and $t(e)$ are the initial and terminal vertices of the oriented edge $e$.
\en
Given a graph $G$, by a slight abuse of notation we also denote by $G$ its topological realization.
Given a graph of groups $\GG$ we denote by $\pi_1(\GG)$ its fundamental group.
We refer to \cite{Se80} for precise definitions and for more information.

A morphism $f$ between two graphs of groups $\GG$ and $\GG'$ consists of a map 
\[ f\colon (V(\GG),E(\GG))\,\,\to\,\, (V(\GG'),E(\GG'))\]
 of the graphs and homomorphisms $\pi_{v}\to \pi_{f(v)}$, $v\in V(\GG)$ and $\pi_{e}\to \pi_{f(e)}$, $e\in V(\GG)$ which respect the maps $\alpha_e$ and $\beta_e$.
Such a morphism induces a homomorphism $f_*\colon \pi_1(\GG)\to \pi_1(\GG')$.

The following elementary lemma gives a criterion for $f_*\colon \pi_1(\GG)\to \pi_1(\GG')$ to be an isomorphism.

\begin{lemma}\label{lem:epimorphism}
Let $f\colon \GG\to \GG'$ be a morphism between two graphs of groups with the following properties:
\bn
\item $f\colon \Gamma(\GG)\to \Gamma(\GG')$ is an isomorphism of graphs,
\item for each $v\in V(\GG)$ the map $\pi_1(\pi_v)\to \pi_1(\pi_{f(v)})$ is an isomorphism,
\item for each $e\in E(\GG)$ the map $\pi_1(\pi_e)\to \pi_1(\pi_{f(e)})$ is an isomorphism.
\en
Then  $f_*\colon \pi_1(\GG)\to \pi_1(\GG')$ is an isomorphism.
\end{lemma}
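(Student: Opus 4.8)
The plan is to use the standard fact that the fundamental group of a graph of groups can be computed by first choosing a maximal tree, collapsing it, and then expressing $\pi_1(\GG)$ as an iterated sequence of amalgamated products (along the tree edges) followed by HNN-extensions (along the remaining edges). Concretely, fix a maximal tree $T$ in $\Gamma(\GG)$; since $f$ is an isomorphism of graphs by hypothesis (i), $f(T)$ is a maximal tree in $\Gamma(\GG')$, and $f$ identifies the non-tree edges of $\Gamma(\GG)$ with those of $\Gamma(\GG')$. Using the presentation of $\pi_1(\GG,T)$ with generators the vertex groups $\pi_v$ together with one stable letter $s_e$ per non-tree edge $e$, and relations identifying $\alpha_e(g)$ with $\beta_e(g)$ for tree edges and $s_e\,\alpha_e(g)\,s_e^{-1}=\beta_e(g)$ for non-tree edges, one sees that the morphism $f$ induces a map on presentations sending generators to generators and relations to relations, and it is then a diagram-chase to check it is an isomorphism given (ii) and (iii).

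First I would prove the statement for a graph of groups whose underlying graph is a tree, by induction on the number of edges. The base case is a single vertex, where $\pi_1(\GG)=\pi_v$ and the claim is exactly hypothesis (ii). For the inductive step, remove a leaf vertex $v$ (a vertex of valence one) together with its edge $e$; then $\pi_1(\GG)=\pi_1(\GG_0)\ast_{\pi_e}\pi_v$, where $\GG_0$ is the graph of groups on the remaining vertices. By hypothesis (i) the morphism $f$ respects this decomposition, and by induction $\pi_1(\GG_0)\to\pi_1(\GG'_0)$ is an isomorphism, while $\pi_v\to\pi_{f(v)}$ and $\pi_e\to\pi_{f(e)}$ are isomorphisms by (ii) and (iii). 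The universal property of the amalgamated product then gives that $\pi_1(\GG_0)\ast_{\pi_e}\pi_v\to\pi_1(\GG'_0)\ast_{\pi_{f(e)}}\pi_{f(v)}$ is an isomorphism: it is surjective because the images of $\pi_1(\GG'_0)$ and $\pi_{f(v)}$ generate, and injective by the normal form theorem for amalgamated products, since $f$ matches normal forms bijectively.

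Second I would handle the non-tree edges. Writing $\Gamma(\GG)=T\cup\{e_1,\ldots,e_k\}$ and adding the edges $e_i$ back one at a time, each step turns the current graph of groups into an HNN-extension of the previous one with associated subgroups $\alpha_{e_i}(\pi_{e_i})$ and $\beta_{e_i}(\pi_{e_i})$ and stable letter $s_{e_i}$. Again $f$ respects this step by (i), maps the edge group $\pi_{e_i}$ isomorphically by (iii), maps the two vertex groups (already shown to be sent isomorphically) appropriately, and sends $s_{e_i}$ to $s_{f(e_i)}$. The universal property of the HNN-extension gives surjectivity, and Britton's lemma (the normal form theorem for HNN-extensions) gives injectivity. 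After $k$ steps we reach $\pi_1(\GG)\to\pi_1(\GG')$, which is therefore an isomorphism.

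I do not expect any real obstacle here: the lemma is genuinely elementary once one invokes the normal form / Britton's lemma machinery for amalgams and HNN-extensions, which is all in \cite{Se80}. The only point requiring a little care is bookkeeping: one must make sure that at each stage the relevant associated subgroup of the edge group sits inside the vertex group to which the induction has already been applied, so that ``$f$ is an isomorphism on that vertex group'' can be combined with ``$f$ is an isomorphism on that edge group'' to conclude $f$ matches the associated subgroups. This is automatic from the compatibility of a morphism of graphs of groups with the structure maps $\alpha_e,\beta_e$, so in the write-up I would simply state that the presentation of $\pi_1(\GG,T)$ maps generator-to-generator and relator-to-relator under $f$ and that the hypotheses make this map invertible on each block of generators, then invoke the normal form theorems to promote this to an isomorphism of groups.
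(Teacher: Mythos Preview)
Your proposal is correct. The paper does not actually prove this lemma: it simply introduces it as ``the following elementary lemma'' and moves on, implicitly referring to the standard Bass--Serre presentation in \cite{Se80}. Your argument via the presentation $\pi_1(\GG,T)$ relative to a maximal tree is exactly the intended elementary proof.

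One minor simplification: you do not need the normal form theorem or Britton's lemma. Once you observe that $f$ carries the presentation of $\pi_1(\GG,T)$ to that of $\pi_1(\GG',f(T))$ by a bijection on generators (by (i) and (ii)) and a bijection on defining relators (by (i), (iii), and compatibility with the structure maps $\alpha_e,\beta_e$), the inverse homomorphism is defined by the inverse bijection on generators, and the two maps are mutually inverse by construction. So injectivity comes for free without any appeal to normal forms; your inductive decomposition into amalgams and HNN-extensions is fine but more than is needed.
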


The main example of a graph of groups that we have in mind is the following. Given an aspherical  3-manifold $M$ with empty or toroidal boundary we obtain a JSJ-decomposition, which gives rise to the JSJ-graph 
$\Gamma(M)$. Recall that  the vertices correspond to the  JSJ-components and the edges correspond to the JSJ-tori of $M$.  Furthermore recall that there exists a canonical map $M\to \Gamma(M)$ that is given by sending the JSJ-components to the corresponding vertices and by sending for each JSJ-torus $T$ the points in $T\times [-1,1]$ onto the corresponding edge.

The manifold $M$ also gives rise to a graph of groups $\GG(M)$ such that $\pi_1(\GG(M)) \cong \pi_1(M)$, where the vertex groups are the fundamental groups of the JSJ-components and the edge groups are the fundamental groups of the JSJ-tori, and the monomorphisms from edge groups to vertex groups are the inclusion induced maps. Let $\mathfrak{T}(M)$ denote the associated Bass-Serre tree, then $\Gamma(M) = \Gamma(\GG(M)) = \mathfrak{T}(M)/\pi_1(M)$ is its quotient by the action of $\pi_1(M)$.

\subsection{Efficiency of the JSJ-graph of groups}

We recall a definition from group theory.

\begin{definition}
Let $\pi$ be a group. We say that a subgroup $\Gamma$ is \emph{separable} if for any $g\in \pi\sm \Gamma$ there exists a
homomorphism $\alpha\colon \pi\to G$ to a finite group $G$ such that $\alpha(g)\not\in \alpha(\Gamma)$.
\end{definition}

The following theorem is \cite[Theorem~A]{WZ10}.

\begin{theorem}\label{thm:wz10}
Let $M$ be an aspherical 3-manifold. Then the corresponding JSJ-graph of groups $\GG(M)$ is ``efficient'', i.e.\  the following holds:
\bn
\item If $M_v, v\in V$ denote the JSJ-components of $M$, then given any choice of finite-index-subgroups $H_v \subset \pi_1(M_v)$, $v \in V$ there exists a finite-index subgroup $H$ of $\pi_1(M)$, such that $H \cap \pi_1(M_v)\subset H_v$ for all $v\in V$.
\item The fundamental groups of the JSJ-tori and JSJ-components are separable in $\pi_1(M)$. 
\en
\end{theorem}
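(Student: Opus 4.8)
This is \cite[Theorem~A]{WZ10}, and I will sketch the argument. The first point is to notice what the two conditions amount to: condition (1) says exactly that the profinite topology on $\pi_1(M)$ restricts to the full profinite topology on each vertex group $\pi_1(M_v)$, and condition (2) says exactly that the vertex and edge groups are closed in the profinite topology on $\pi_1(M)$, i.e.\ separable; together with residual finiteness of $\pi_1(M)$ --- which holds by \cite{He87} together with geometrization --- these are the defining properties of an efficient graph of groups. So it suffices to establish (1) and (2).

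The input from the JSJ-pieces is subgroup separability. Each component $M_v$ is either Seifert fibered or hyperbolic with empty or toroidal boundary. If $M_v$ is Seifert fibered then $\pi_1(M_v)$ is LERF by Scott~\cite{Sco73}. If $M_v$ is hyperbolic then $\pi_1(M_v)$ is virtually compact special by Agol's theorem, hence LERF by Wise; what is actually needed is the sharper statement, due to Hamilton, that the peripheral $\Z^2$-subgroups --- and double cosets of them --- are separable in $\pi_1(M_v)$. From this one extracts the key quantitative lemma: for every finite-index subgroup $H_v\le\pi_1(M_v)$ and every sufficiently divisible integer $n$ there is a finite-index subgroup $H_v'\le H_v$ with $H_v'\cap\pi_1(T)=n\cdot\pi_1(T)$ simultaneously for every boundary torus $T$ of $M_v$, even when a torus is glued to itself (here the double-coset version of separability is used).

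To prove (1), start from given finite-index subgroups $H_v\le\pi_1(M_v)$, fix an integer $n$ that is ``sufficiently divisible'' for all of the finitely many pieces at once, and use the lemma to shrink each $H_v$ to a finite-index subgroup with $H_v\cap\pi_1(T)=n\cdot\pi_1(T)$ on every boundary torus $T$. The associated finite cover $\wti{M_v}\to M_v$ then restricts over each boundary torus to the cover determined by $n\cdot\pi_1(T)\le\pi_1(T)\cong\Z^2$; since every JSJ-gluing is an isomorphism $\Z^2\to\Z^2$ and hence preserves the characteristic subgroup $n\cdot\Z^2$, the covers $\wti{M_v}$ glue along the JSJ-tori to a finite cover $\wti M\to M$ whose piece over each $M_v$ is $\wti{M_v}$; keeping track of basepoints, $H:=\pi_1(\wti M)$ satisfies $H\cap\pi_1(M_v)\subseteq H_v$ for all $v$. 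For (2), combine (1) with the action of $\pi_1(M)$ on the Bass--Serre tree $\mathfrak{T}(M)$: if $g\notin\pi_1(M_v)$ then $g$ does not fix the vertex of $\mathfrak{T}(M)$ stabilized by $\pi_1(M_v)$, so applying (1) with $H_v=\pi_1(M_v)$ and the remaining $H_w$ chosen so that $\wti M$ has the preimage of $M_v$ a disjoint union of copies of $M_v$, the finite quotient of $\pi_1(M)$ corresponding to $\wti M$, acting on the finite quotient graph, separates $g$ from $\pi_1(M_v)$; separability of an edge group $\pi_1(T_e)$ follows the same way, using that $\pi_1(T_e)$ is separable in each adjacent $\pi_1(M_v)$.

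The main obstacle is the quantitative peripheral-separability lemma in the hyperbolic case: one must pass, inside a prescribed finite-index subgroup of a hyperbolic JSJ-component, to a further finite-index subgroup cutting out a prescribed characteristic subgroup $n\cdot\Z^2$ on \emph{every} boundary torus at once, with $n$ uniform across all pieces and compatible with all gluings (including self-gluings of a single piece). This is precisely the point where Hamilton's separability results for abelian and peripheral subgroups of hyperbolic 3-manifold groups --- and behind them Agol's virtual specialness --- are indispensable; the remainder is bookkeeping about assembling finite covers and reading off finite quotients from the action on the Bass--Serre tree.
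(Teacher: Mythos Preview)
The paper does not give its own proof of this statement; it simply records it as \cite[Theorem~A]{WZ10}. Your proposal goes further and sketches the argument from that reference, and the outline is broadly correct: the key mechanism is to match finite covers of the JSJ-pieces along the JSJ-tori by arranging that each piece admits a finite cover inducing the characteristic subgroup $n\cdot\Z^2$ on every boundary torus, and then to glue; separability of vertex and edge groups is then read off from the resulting supply of finite covers via the Bass--Serre tree.

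Two small corrections. The citation for LERF of Seifert fibered manifold groups should be \cite{Sco78}, not \cite{Sco73} (the latter is the compact core theorem). More substantively, invoking Agol's virtual specialness for the hyperbolic pieces is both anachronistic --- \cite{WZ10} predates Agol's 2012 theorem --- and unnecessary: as you correctly identify immediately afterwards, what the argument actually uses is Hamilton's separability of abelian (in particular peripheral $\Z^2$) subgroups, which was established earlier and does not rely on LERF or virtual specialness. You could drop the Agol/Wise sentence entirely without loss. Your argument for part (2) is somewhat compressed (the phrase ``$H_v=\pi_1(M_v)$ and the remaining $H_w$ chosen so that \ldots'' is not quite precise as stated), but the underlying idea --- that condition (1) provides enough finite quotients of the Bass--Serre tree to distinguish vertices, hence to separate vertex stabilizers --- is the right one.
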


In the setting of finite graphs of profinite groups there is an analogue of the classical Bass-Serre theory. A profinite graph is an inverse limit of finite graphs. It is connected if each of its finite quotients is connected. 
Its profinite fundamental group is the group of deck transformations of its profinite universal cover, defined by satisfying an appropriate universal property among its connected Galois covers, see \cite[Chapter 2]{Rib17} for details.
 A profinite graph is called a \emph{profinite tree} if its profinite fundamental group is trivial. The definition of the fundamental group of a finite graph of profinite groups is analogous to the one in the classical Bass-Serre theory. One can associate to a finite graph of profinite groups $\what{\GG}$ a profinite Bass-Serre tree $\what{\mathfrak{T}}$ on which the fundamental group $\pi_1 \what{\GG}$ acts. Similarly to the classical case, the profinite Bass-Serre tree is built as a set of  right cosets of the edge and vertex groups in the profinite fundamental group of the profinite graph of groups,
see \cite[Section 3]{ZM89}, \cite{ZM90}, \cite [Chapter 6]{Rib17}. Background material and details for the theory of profinite groups acting on profinite trees can be found also in \cite{RZ96}, 
\cite[Sections 1,2]{RZ00}.

The next result is a consequence of the efficiency of the JSJ-graph of groups of a 3-manifold, together with \cite[Section 3]{ZM89}, \cite[Proposition 3.2]{RZ96}, \cite[Proposition 2.5]{C-B13}, \cite[Section 6.3]{Rib17} (see also \cite[Theorems 5.4 and 5.6]{Wil17a}, \cite[Section 1.1]{WZ17b})

\begin{theorem}\label{thm:profinitegraph}
Let $M$ be an aspherical  3-manifold whose  JSJ-graph of groups is $\GG(M)$ and associated Bass-Serre tree $\mathfrak{T}(M)$.
 Let $\what{\GG}(M)$ be the finite graph of profinite groups obtained from $\GG(M)$ by replacing each vertex and edge group by its profinite completion and each monomorphism from an edge group into a vertex group by its extension to the profinite completions. Then the following hold:
\begin{enumerate}
\item[(a)]  Each profinite edge group of $\what{\GG}(M)$ injects in the profinite fundamental group $\pi_1(\what{\GG}(M))$ and 
 $\what{\pi_1(\GG(M))} \cong \pi_1(\what{\GG}(M))$. 
\item[(b)]  There is an associated profinite Bass-Serre tree $\what{\mathfrak{T}}(M) = \underset{\longleftarrow}{\lim} \,\mathfrak{T}(M) /G$, where $G$ runs over all normal finite index subgroups of $\pi_1(M)$ such that the following hold:
\begin{enumerate}
\item[(b1)] The Bass-Serre tree $\mathfrak{T}(M)$ associated to the JSJ-decomposition of $M$ embeds and is dense in  $\what{\mathfrak{T}}(M)$.\
\item[(b2)] The profinite group $\what{\pi_1(M)}$ acts continuously on $\what{\mathfrak{T}}(M)$ and
 \[ \what{\mathfrak{T}}(M)/\what{\pi_1(M)}\, =\, \Gamma(M)\, =\, \mathfrak{T}(M)/\pi_1(M).\]
\item[(b3)]  If $\what{p}_M\colon \what{\mathfrak{T}}(M) \to \what{\mathfrak{T}}(M)/\what{\pi_1(M)} = \Gamma(M)$ is the projection, the stabilizer of a vertex $\wti{v} \in V(\what{\mathfrak{T}}(M))$ is conjugated to the profinite completion  of the vertex group associated to the vertex $\what{p}_M(\wti{v}) \in V(\Gamma(M))$ in the  JSJ-graph of groups $\GG(M)$. 
\end{enumerate}
\end{enumerate}
\end{theorem}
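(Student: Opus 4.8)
The plan is to derive the statement from the efficiency of the JSJ-graph of groups, Theorem~\ref{thm:wz10}, together with the profinite analogue of Bass--Serre theory. Recall that a finite graph of groups $\GG$ with fundamental group $G=\pi_1(\GG)$ is called \emph{efficient} if $G$ is residually finite, every vertex and edge group is closed in the profinite topology of $G$, and the profinite topology of $G$ induces on each vertex and edge group its own full profinite topology. The first step is to observe that $\GG(M)$ is efficient in this sense: residual finiteness of $\pi_1(M)$ was recorded in Section~\ref{section:definition-profinite}, closedness (i.e.\ separability) of the vertex and edge groups is part~(2) of Theorem~\ref{thm:wz10}, and the assertion that the induced topology on each vertex group is the full profinite topology is precisely part~(1) of Theorem~\ref{thm:wz10}; fullness on the edge groups $\Z^2$ then follows because each such $\Z^2$ sits inside a vertex group carrying its full profinite topology and is separable there.

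For part~(a), I would invoke the dictionary between efficient finite graphs of abstract groups and finite graphs of profinite groups developed in \cite[Section~3]{ZM89}, \cite[Proposition~3.2]{RZ96} and \cite[Section~6.3]{Rib17} (see also \cite[Theorems~5.4 and~5.6]{Wil17a}, \cite[Section~1.1]{WZ17b}): for an efficient graph of groups $\GG$, if one forms $\what{\GG}$ by replacing each vertex and edge group by its profinite completion and each edge monomorphism by its completion, then each profinite edge group and each profinite vertex group of $\what{\GG}$ embeds into $\pi_1(\what{\GG})$, and the natural map $\what{\pi_1(\GG)}\to\pi_1(\what{\GG})$ is an isomorphism. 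Applying this to $\GG=\GG(M)$ yields~(a) verbatim.

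For part~(b), the profinite Bass--Serre tree $\what{\mathfrak{T}}(M)$ is the standard profinite tree attached to the profinite graph of groups $\what{\GG}(M)$, built as a set of right cosets of the profinite edge and vertex groups in $\pi_1(\what{\GG}(M))$ exactly as described before the theorem; the point is to identify it with the inverse limit $\underset{\longleftarrow}{\lim}\,\mathfrak{T}(M)/G$ over the finite-index normal subgroups $G$ of $\pi_1(M)$. Since $\mathfrak{T}(M)/\pi_1(M)=\Gamma(M)$ is already finite and $\pi_1(M)$ is dense in $\what{\pi_1(M)}$, each $\mathfrak{T}(M)/G$ is a finite graph, these form an inverse system, and the limit is a profinite tree on which $\what{\pi_1(M)}$ acts continuously; comparing the coset descriptions identifies this limit with $\what{\mathfrak{T}}(M)$. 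The embedding and density of $\mathfrak{T}(M)$ in $\what{\mathfrak{T}}(M)$, statement~(b1), come from the compatible quotient maps $\mathfrak{T}(M)\to\mathfrak{T}(M)/G$ together with separability of the edge and vertex stabilizers; (b2) is immediate by passing $\mathfrak{T}(M)/\pi_1(M)=\Gamma(M)$ to the limit; and (b3) follows from the general description of vertex stabilizers in a profinite Bass--Serre tree as conjugates of the profinite vertex groups of $\what{\GG}(M)$, which by construction are the profinite completions of the JSJ-vertex groups. All of this is contained in \cite[Section~3]{ZM89}, \cite{ZM90}, \cite[Chapter~6]{Rib17} and \cite[Proposition~2.5]{C-B13}.

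The main obstacle is not any single computation but aligning the hypotheses: one must verify that ``efficiency'' as furnished by Theorem~\ref{thm:wz10} is exactly the input demanded by the profinite Bass--Serre machinery — in particular that the edge groups $\Z^2$ remain closed and inherit their full profinite topology, so that they inject into $\pi_1(\what{\GG}(M))$ and the completion of the graph of groups has no collapsing — and that $\underset{\longleftarrow}{\lim}\,\mathfrak{T}(M)/G$ is genuinely a profinite tree and cofinal among the relevant finite quotient graphs. Once these compatibility points are in place, the theorem is a formal consequence of the cited results.
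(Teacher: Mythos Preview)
Your proposal is correct and follows the same route as the paper: the theorem is stated there without a detailed proof, being described simply as a consequence of the efficiency of the JSJ-graph of groups (Theorem~\ref{thm:wz10}) together with the profinite Bass--Serre theory references \cite{ZM89,RZ96,C-B13,Rib17,Wil17a,WZ17b}. Your write-up in fact supplies more detail than the paper does, spelling out how each part of efficiency is used and which conclusion each cited source yields.
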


\subsection{Acylindrical actions}
The following definition is taken from \cite[Definition~3.2]{WZ17b}, see also \cite[Section~4]{Wil17a}.

\begin{definition}
An action of a profinite group $\what{\pi}$ on a 
profinite tree $\what{\mathcal{T}}$ is called \emph{$k$-acylindrical} if for every $g\in \what{\pi}$ the subtree fixed by $g$ is either empty or
of diameter at most $k$. Such an action is called \emph{acylindrical}
if it is $k$-acylindrical for some $k$.
\end{definition}

In the following one can treat the definition of an acylindrical action as a black box.
What is relevant to us is summarized in the following two lemmas.

\begin{lemma}\label{lem:action-is-acylindrical}
Let $M$ be a clean 3-manifold  with non-empty boundary. Let $\mathcal{G}(M)$ be the graph of groups defined by the JSJ-decomposition of $M$. Then the action of $\what{\pi_1(M)}$ on the profinite tree $\what{\mathcal{T}}(M)$ is 2-acylindrical.
\end{lemma}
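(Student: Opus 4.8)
The plan is to use the structure theory of profinite trees coming from Theorem~\ref{thm:profinitegraph} together with the geometry of clean 3-manifolds with boundary. The key input is a description of the edge and vertex stabilizers for the action of $\what{\pi_1(M)}$ on $\what{\mathfrak{T}}(M)$: by part (b3) of Theorem~\ref{thm:profinitegraph}, vertex stabilizers are conjugates of profinite completions of JSJ-component groups, and similarly edge stabilizers are conjugates of profinite completions of the fundamental groups of JSJ-tori, i.e.\ copies of $\what{\Z^2} = \Z_{\what{\phantom{n}}}^2$ (more precisely $\what{\Z}^2$). To bound the diameter of the fixed subtree of an element $g \in \what{\pi_1(M)}$, I would argue that if $g$ fixes an edge path of length $\geq 3$ in $\what{\mathfrak{T}}(M)$, then $g$ lies in the intersection of three consecutive edge stabilizers, hence (after conjugating) in the intersection $\what{\pi_1(T_1)} \cap \what{\pi_1(M_v)} \cap \what{\pi_1(T_2)}$ inside $\what{\pi_1(M_v)}$, where $M_v$ is the middle JSJ-component and $T_1, T_2$ are two distinct boundary tori of $M_v$ along which it is glued.

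The heart of the matter is therefore a \emph{profinite malnormality-type statement} for boundary subgroups of JSJ-pieces: in the profinite completion of the fundamental group of a clean JSJ-piece $M_v$ with non-empty boundary, two distinct peripheral torus subgroups $\what{\pi_1(T_1)}$ and $\what{\pi_1(T_2)}$ intersect trivially, and moreover a peripheral subgroup is ``almost malnormal'' in the profinite sense (its conjugates by elements outside it meet it trivially). For a hyperbolic JSJ-piece this should follow from the work of Wilton--Zalesskii (and the fact that, since $M$ is clean, no JSJ-piece is $T^2 \times I$ and no gluing is along a boundary-parallel torus), using that geometrically finite subgroups of hyperbolic 3-manifold groups are separable and the corresponding closures in the profinite completion behave well. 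For a Seifert-fibered JSJ-piece, which by cleanness is a product $S^1 \times \Sigma$ with $\Sigma$ a surface with boundary, the fundamental group is $\Z \times \pi_1(\Sigma)$ with $\pi_1(\Sigma)$ free; here the computation reduces to the profinite completion $\what{\Z} \times \what{F}$ and one uses that distinct peripheral subgroups of a free group have trivial intersection in the profinite completion, together with a fiber-preserving argument on the $\what{\Z}$-factor. One must also handle the possibility that $g$ fixes a path passing through \emph{two} different vertices, which is ruled out similarly: such a $g$ lies in the intersection of two vertex groups meeting along a single edge, i.e.\ in a conjugate of an edge group, and then fixing one more edge on either side forces it into the trivial intersection just discussed.

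The step I expect to be the main obstacle is verifying the trivial-intersection statement for two distinct peripheral torus subgroups inside the profinite completion of a single JSJ-piece, in particular for the hyperbolic pieces. Classically this is malnormality of peripheral subgroups in a cusped hyperbolic manifold, but passing to profinite completions requires separability results and control over closures of double cosets; I would invoke the relevant results of Wilton--Zalesskii and Hamilton--Wilton--Zalesskii on profinite properties of 3-manifold groups and their geometric subgroups, citing \cite{WZ10}, \cite{WZ17b}, \cite{Wil17a}. A subtlety worth flagging is the role of cleanness: it is precisely what guarantees the Seifert pieces are honest products (so their peripheral structure is the obvious one) and that no gluing torus is ``wasted'' as a $T^2 \times I$ collar, which is what makes the bound $k = 2$ (rather than something larger) correct. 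Assembling these local statements, any $g$ fixing a path of length $\geq 3$ is forced to be trivial, so the fixed subtree has diameter at most $2$, establishing $2$-acylindricity.
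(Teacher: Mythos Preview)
Your approach differs from the paper's and contains a real gap in the Seifert case.

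The paper does not analyse stabilisers directly. Instead it glues one cusped hyperbolic 3-manifold to each boundary torus of $M$, obtaining a 3-manifold $N$ whose JSJ-decomposition is that of $M$ together with the new hyperbolic pieces. Because $M$ is clean, every Seifert fibered JSJ-component of $N$ is a product $S^1\times\Sigma$ with $\partial\Sigma\ne\emptyset$, hence surjects a free group and is large in the sense of \cite{HWZ13}. Then \cite[Lemma~4.11]{HWZ13} gives directly that the action of $\what{\pi_1(N)}$ on $\what{\mathcal{T}}(N)$ is $2$-acylindrical, and since $\what{\mathcal{T}}(M)$ sits inside $\what{\mathcal{T}}(N)$ the same holds for $M$. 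So the paper's proof is essentially a one-line reduction to a known result, whereas you are in effect reproving that result from scratch.

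The gap in your direct argument is your claim that in the profinite completion of a clean JSJ-piece two distinct peripheral torus subgroups intersect trivially. For a Seifert piece $S^1\times\Sigma$ this is false: the peripheral subgroups are $\what{\Z}\times\what{\langle c_i\rangle}$ inside $\what{\Z}\times\what{\pi_1(\Sigma)}$, and while the closures $\what{\langle c_i\rangle}$ do meet trivially in $\what{\pi_1(\Sigma)}$, the full torus closures share the entire fibre $\what{\Z}\times\{1\}$. Your ``fibre-preserving argument on the $\what{\Z}$-factor'' does not dispose of this; it only says the intersection \emph{is} the fibre. This is exactly why the bound is $2$ and not $1$: an element fixing two edges at a Seifert vertex is forced only into the fibre closure. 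To kill it you must pass across the common JSJ-torus into the \emph{adjacent} vertex group and argue either (i) that vertex is hyperbolic, so profinite malnormality of its cusp subgroups finishes, or (ii) that vertex is also Seifert, and then the two fibre directions are linearly independent primitive vectors in the edge group $\Z^2$, whence their $\what{\Z}$-spans in $\what{\Z}^2$ meet only in $0$. Neither step appears in your outline, and without (ii) a length-$3$ path through two adjacent Seifert vertices is simply not covered. This cross-edge fibre-mismatch is precisely the substance of \cite[Lemma~4.11]{HWZ13} that the paper invokes.
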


\begin{proof}
Let $M$ be a  clean 3-manifold  with non-empty boundary. 
Denote by $N$ the result of gluing hyperbolic 3-manifolds to $M$ along the boundary components of $M$.
Note that $N$ is again an orientable 3-manifold. By \cite[Proposition~1.6.2]{AFW15} the JSJ-decomposition of $N$ is given by the JSJ-decomposition of $M$ together with the hyperbolic manifolds we had just attached.

Since $M$ is clean, every Seifert fibered JSJ-component of  $N$ admits an epimorphism onto a free group, in particular it is large in the sense of \cite{HWZ13}. Then it follows from \cite[Lemma~4.11]{HWZ13} that the action of $\pi_1(N)$ on $\mathcal{T}(N)$ is 2-acylindrical. Since $\mathcal{T}(M)\subset \mathcal{T}(N)$ this also implies that the action of $\pi_1(M)$ on $\mathcal{T}(M)$ is 2-acylindrical.
\end{proof}

The following lemma is a slight rewriting of \cite[Lemma 4.4 and 4.5]{WZ17b}, see also \cite[Proposition 6.23]{Wil17b}

\begin{lemma}\label{lem:fixed-vertex}
\bn
\item Consider $N$ a compact, hyperbolic 3-manifold with $($possibly
empty$)$ toroidal boundary. If $\what{\pi_1(N)}$ acts acylindrically on a profinite tree $\what{\mathcal{T}}$ with
abelian edge stabilizers, then $\what{\pi_1(N)}$  fixes a unique vertex.
\item Let $N=S^1\times \Sigma$ where $\Sigma$ is a surface with $\chi(\Sigma)<0$.  If $\what{\pi_1(N)}$ acts acylindrically on a profinite tree
$\what{\mathcal{T}}$ with abelian edge stabilizers, then $\what{\pi_1(N)}$ fixes a unique vertex.
\en
\end{lemma}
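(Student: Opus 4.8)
The plan is to prove, for $G:=\what{\pi_1(N)}$ acting $k$-acylindrically on $\what{\mathcal T}$ with abelian edge stabilizers, that $G$ fixes exactly one vertex; the two parts of the lemma run in parallel and diverge only in the existence step. First I would record two elementary consequences of the hypotheses. \emph{Uniqueness:} if a closed \emph{non-abelian} subgroup $H\le G$ fixes a vertex then it fixes a unique one, for otherwise $H$ fixes the profinite segment joining two of its fixed vertices, of some length $\ell\ge1$; if $\ell>k$ then every $1\ne h\in H$ fixes a subtree of diameter $>k$, against acylindricity, and if $\ell\le k$ then $H$ lies inside an edge stabilizer and is abelian. \emph{Descent:} hence if a \emph{non-abelian normal} subgroup of $G$ fixes a vertex, then $G$ fixes that (unique) vertex. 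Now $\pi_1(N)$ is residually finite (being a $3$-manifold group) and non-abelian --- in part~(1) it is a non-elementary relatively hyperbolic group, and in part~(2) it surjects onto $\pi_1(\Sigma)$, which is non-abelian since $\chi(\Sigma)<0$ --- so $G$ is non-abelian, and uniqueness of the fixed vertex follows from the first point once existence is known; the rest of the argument establishes existence.

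For the hyperbolic case I would argue by contradiction. Assuming $G$ fixes no vertex, the structure theory of profinite groups acting on profinite trees (\cite{ZM89,ZM90,RZ96}, \cite[Chapters~4 and~6]{Rib17}), together with acylindricity (which furnishes a minimal $G$-invariant profinite subtree), puts $G$ --- as in \cite[Lemmas~4.4 and~4.5]{WZ17b} --- into one of two situations: either (a) $G$ stabilizes an end or a line of $\what{\mathcal T}$, or (b) $G$ decomposes nontrivially as a profinite amalgamated free product or HNN extension over a conjugate of an edge stabilizer, hence over an abelian subgroup. In case~(a), acylindricity forces an index-at-most-$2$ subgroup of $G$ to map to $\what{\Z}$ with trivial kernel --- a nontrivial kernel element would fix an infinite subtree --- so $G$ is virtually procyclic and $\pi_1(N)$ is virtually abelian, which a hyperbolic $3$-manifold group is not. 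In case~(b), $\pi_1(N)$ is made to split nontrivially over an abelian subgroup; but a hyperbolic $N$ is atoroidal, acylindrical and aspherical with incompressible boundary, so it admits no such splitting, and the passage from the profinite splitting to a topological one --- carried out in \cite[Lemma~4.4]{WZ17b} using separability and efficiency properties of $\pi_1(N)$ (cf.\ Theorem~\ref{thm:wz10}) --- produces a contradiction. Hence $G$ fixes a (unique) vertex.

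For the Seifert case $N=S^1\times\Sigma$ the central circle does all the work: let $z$ generate $\what{\Z}=\what{\pi_1(S^1)}\le Z(G)$. If $z$ acts elliptically, $\op{Fix}(z)$ is a nonempty subtree of diameter $\le k$ (by acylindricity) that is $G$-invariant (by centrality of $z$), so $G$ fixes one of its vertices --- done. If $z$ acts hyperbolically, its axis is a $G$-invariant line (again by centrality), and the argument of case~(a) applies to give $G$ virtually procyclic; but then $\pi_1(S^1\times\Sigma)=\Z\times\pi_1(\Sigma)$ is virtually abelian, which is false since $\chi(\Sigma)<0$. So $z$ is elliptic and $G$ fixes a vertex, and uniqueness finishes both parts. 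The hard part will be the two ingredients I am importing from \cite[Section~4]{WZ17b} (see also \cite[Proposition~6.23]{Wil17b}): making the dichotomy (a)/(b) precise over the profinite category --- minimal invariant subtrees and the correspondence with finite graphs of profinite groups are genuinely delicate --- and, in case~(b) of the hyperbolic situation, transferring an abstract profinite splitting over an abelian subgroup to a topological splitting of $N$. The remaining steps --- uniqueness, the central-element argument, and the virtually-procyclic contradiction --- are routine consequences of acylindricity.
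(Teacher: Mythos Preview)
The paper does not prove this lemma at all: it introduces it with the sentence ``The following lemma is a slight rewriting of \cite[Lemma~4.4 and~4.5]{WZ17b}, see also \cite[Proposition~6.23]{Wil17b}'' and then states it without proof. So there is no in-paper argument to compare against; what you have written is an attempted reconstruction of the arguments in those cited references, and you explicitly acknowledge as much.

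Your outline is broadly faithful to how those results are obtained --- uniqueness from the abelian edge-stabilizer hypothesis, a fixed-point/splitting dichotomy coming from profinite Bass--Serre theory together with acylindricity in the hyperbolic case, and the central-element trick in the Seifert case --- and you correctly flag that the genuinely hard steps (the (a)/(b) dichotomy and the passage from a profinite splitting to a contradiction for hyperbolic $N$) are black-boxed from \cite{WZ17b,Wil17b}. Two places deserve more care in the profinite category. First, in your uniqueness paragraph you speak of the ``profinite segment \dots\ of some length $\ell\ge 1$'' and split on $\ell>k$ versus $\ell\le k$; but a geodesic in a profinite tree need not have finite combinatorial length, so this case-split is not well posed. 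The conclusion survives for a simpler reason: the fixed-point set of $H$ is a profinite subtree containing two distinct vertices, hence contains at least one edge, so $H$ sits in an abelian edge stabilizer --- no appeal to acylindricity is needed here. Second, in the Seifert case you invoke an ``axis'' for $z$ when it acts without fixed vertex; the existence and structure of such an axis in a profinite tree is not automatic and needs the machinery of \cite{Rib17} (minimal invariant subtrees for procyclic groups), which again you are implicitly importing. With these caveats, your sketch is an accurate high-level summary of the cited proofs rather than an independent argument, which is exactly what the paper itself settles for.
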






\subsection{Good groups}
In this short section  we recall that, following Serre~\cite[D.2.6.~Exercise D]{Se97} a group $\pi$ is called \emph{good} if for every finite abelian group $A$ and any representation $\alpha\colon \pi\to \op{Aut}(A)$ and any $i$ the natural map
\[ H^i(\what{\pi};A) \to H^i(\pi;A)\]
is an isomorphism.

The following theorem is due to the  work of Wise~\cite{Wis09,Wis12a,Wis12b}, to works of Wilton--Zalesskii~\cite{WZ10} and Cavendish~\cite[Section~3.5~and Lemma~3.7.1]{Ca12}. We 
also refer to  \cite{CF17,GM17}for alternative proofs and we refer to \cite[(H.26)]{AFW15} for details.

\begin{theorem}\label{thm:good}
The fundamental group of every 3-manifold $($with no restrictions on the boundary$)$ is good.
\end{theorem}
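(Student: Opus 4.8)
The plan is to reduce the statement, via a sequence of standard devissage arguments, to the two basic classes of pieces in the geometric/JSJ decomposition: the fundamental groups of Seifert fibered and hyperbolic pieces (with toroidal boundary), and then to invoke the known goodness results for those. First I would record the elementary stability properties of goodness that are needed: (i) if $1\to A\to B\to C\to 1$ is a short exact sequence of finitely generated residually finite groups with $A$, $C$ good and $A$ ``cohomologically good'' in the sense that $H^i(\what{A};M)$ is finite for every finite module $M$ and every $i$, then $B$ is good (this is the Lyndon–Hochschild–Serre spectral sequence argument of Serre, comparing the discrete and profinite spectral sequences); (ii) goodness passes to and from finite-index subgroups; (iii) a group that is the fundamental group of a finite graph of good groups in which all edge and vertex groups are good, and which embeds appropriately in its profinite completion (so that the profinite graph-of-groups decomposition of Theorem~\ref{thm:profinitegraph} applies), is again good — here the relevant device is the Mayer–Vietoris / graph-of-groups spectral sequence, compared with its profinite analogue using Theorem~\ref{thm:profinitegraph}(a) that each edge group injects and $\what{\pi_1(\GG)}\cong\pi_1(\what{\GG})$.

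Next I would dispose of the closed case by geometrization and the torus-theorem reduction. A closed irreducible 3-manifold group is either (a) virtually a surface-times-$S^1$ or $\mathbb{Z}^3$-by-finite (the Seifert and Sol cases, which are good since surface groups, $\mathbb{Z}^n$, and polycyclic groups are good, using (i) and (ii)), or (b) has a nontrivial JSJ decomposition, in which case $\pi_1$ is a graph of groups whose pieces are Seifert fibered manifolds with boundary (hence virtually surface-times-$S^1$, good) and finite-volume hyperbolic manifolds with cusps. Goodness of closed and cusped hyperbolic 3-manifold groups is the deep input: it follows from Wise's work on virtual specialness of hyperbolic 3-manifold groups \cite{Wis09,Wis12a,Wis12b}, since virtually compact special groups are good, combined with Wilton–Zalesskii \cite{WZ10} and the argument of Cavendish \cite[Section~3.5 and Lemma~3.7.1]{Ca12}. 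Granting this, apply the graph-of-groups stability property (iii), with the edge groups $\mathbb{Z}^2$ (good, cohomologically finite) and the profinite graph-of-groups structure supplied by Theorem~\ref{thm:profinitegraph}, to conclude the closed case.

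For the bounded case I would run the same argument: an irreducible 3-manifold with toroidal boundary either is itself Seifert fibered or hyperbolic (handled as above), or it is finitely covered by a manifold whose pieces are products and hyperbolic pieces (cf.\ Proposition~\ref{prop:covered-by-clean}); in any event $\pi_1$ is a graph of groups with $\mathbb{Z}^2$ edge groups and Seifert/hyperbolic vertex groups, and applying (ii) and (iii) once more finishes the proof. The main obstacle is genuinely not in the bookkeeping but in the hyperbolic input: establishing that the fundamental group of a finite-volume hyperbolic 3-manifold is good is exactly where Wise's special-cube-complex machinery (or the alternative approaches of \cite{CF17,GM17}) is indispensable, and everything else is a formal, if careful, assembly via spectral-sequence comparison and the profinite Bass–Serre theory recorded in Theorem~\ref{thm:profinitegraph}. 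I would therefore organize the write-up so that this input is cited as a black box and the remaining reductions are presented as the lemmas (i)--(iii) above applied in turn.
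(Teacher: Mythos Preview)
The paper does not give its own proof of this theorem; it is stated as a known result attributed to Wise \cite{Wis09,Wis12a,Wis12b}, Wilton--Zalesskii \cite{WZ10}, and Cavendish \cite{Ca12}, with details deferred to \cite[(H.26)]{AFW15}. Your outline is precisely the standard argument found in those references (especially the Cavendish/AFW treatment): establish stability of goodness under extensions, finite index, and graphs of groups; reduce via the JSJ decomposition to Seifert and hyperbolic pieces; handle Seifert pieces via their virtual surface-bundle structure; and invoke Wise's virtual specialness for the hyperbolic pieces. So in spirit your plan and the cited proofs coincide.

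Two points where your write-up would be incomplete as stated. First, the theorem is asserted with \emph{no restrictions on the boundary}, but your reduction only covers the closed and toroidal-boundary cases (you explicitly invoke Proposition~\ref{prop:covered-by-clean}, which assumes toroidal boundary); manifolds with higher-genus boundary components require a separate treatment, e.g.\ via the Haken hierarchy or by citing that compact aspherical 3-manifolds with non-empty boundary have virtually special fundamental groups by Wise. Second, you restrict to the irreducible case throughout without recording the prime-decomposition step; this is trivially repaired by your lemma (iii) with trivial edge groups (free products of good groups are good), but should be said. Neither omission affects the paper's only application of the theorem (to aspherical $K$ and $M$ with toroidal boundary in the Claim of Section~\ref{section:grothendieck-rigid}), but both are needed for the theorem as stated.
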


\subsection{Monomorphisms of 3-manifold groups}

\begin{definition}
Let $M$ be an irreducible 3-manifold with empty or toroidal boundary.
The \emph{characteristic submanifold $\Sigma M$} of $M$ is the union of the following:
\bn
\item all JSJ-components of $M$ that are Seifert fibered,
\item all boundary tori of $M$ which cobound an atoroidal JSJ-component,
\item all JSJ-tori of $M$.
\en
\end{definition}

By \cite[Theorem~1.7.7]{AFW15} the above characteristic submanifold is precisely the same object as introduced by Jaco-Shalen~\cite[Chapter~V]{JS79}.



We say that a map $f\colon M\to N$ between two irreducible 3-manifolds with empty or toroidal boundary is \emph{nice} if $\Sigma M$ is a collection of components of $f^{-1}(\Sigma N)$. The following lemma follows immediately from the definition of the JSJ-graph and the definition of a nice map.

\begin{lemma}\label{lem:induced-graph map}
A nice  map $f\colon M\to N$ between two irreducible 3-manifolds with empty or toroidal boundary
induces a map $f\colon \Gamma(M)\to \Gamma(N)$ such that the following diagram commutes
\[ \xymatrix@C1.2cm@R0.5cm{ M\ar[d]\ar[r]^{f} & N \ar[d] \\ \Gamma(M)\ar[r]_{f}&\Gamma(N).}\]
\end{lemma}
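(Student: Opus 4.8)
The plan is to realise $\Gamma(M)$ as a quotient of $M$, to observe that $\pi_N\circ f$ is compatible with this quotient, and to obtain the induced map $\ol{f}$ and the commuting square by a descent argument; the only substantial point will be that a nice map respects the two JSJ-decompositions. First I would recall the topological model of the collapsing map: choose for each JSJ-torus of $M$ a product neighbourhood $T_e\times[-1,1]$, pairwise disjoint and contained in $\Sigma M$. Then $\Gamma(M)$ is the finite graph with one vertex $v$ for each JSJ-component $M_v$ and one edge $e$ for each JSJ-torus $T_e$, and the canonical map $\pi_M\colon M\to\Gamma(M)$ sends $M_v$ with its open JSJ-torus collars deleted (call this $M_v^-$) to the vertex $v$, and sends each $T_e\times[-1,1]$ onto the edge $e$ by the $[-1,1]$-coordinate. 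Thus $\pi_M^{-1}(v)=M_v^-$ and the fibre of $\pi_M$ over an interior point of an edge $e$ is a torus slice $T_e\times\{t\}$; since $\pi_M$ is a closed continuous surjection onto the Hausdorff graph $\Gamma(M)$ it is a quotient map, so a continuous map out of $M$ descends along $\pi_M$ exactly when it is constant on these fibres. I would fix the analogous data for $N$, giving $\pi_N\colon N\to\Gamma(N)$.

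Next I would extract the geometric content of niceness: possibly after a homotopy of $f$ supported near $\Sigma M$ (which changes nothing below), $f$ carries the JSJ-decomposition of $M$ into that of $N$, in the sense that each $M_v^-$ is mapped into a single $N_{w(v)}^-$ and each product neighbourhood $T_e\times[-1,1]$ is mapped either onto a product neighbourhood of a JSJ-torus of $N$ respecting the $[-1,1]$-coordinate, or into a single JSJ-component of $N$. The input is that $\Sigma M$ is a union of \emph{connected components} of $f^{-1}(\Sigma N)$: hence the complementary atoroidal blocks of $M$ are sent into the complementary atoroidal blocks of $N$, while the Seifert blocks and the JSJ-tori of $M$ are sent into $\Sigma N$, and connectedness of each block forces its image into a single block of $N$; a standard general-position adjustment near the JSJ-tori then puts $f$ in the ``graph-like'' position just described. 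Granting this, $\pi_N\circ f$ is constant on each fibre $M_v^-$ of $\pi_M$ (with value the vertex of the JSJ-component of $N$ containing its image) and on each torus slice $T_e\times\{t\}$, so it descends to a unique continuous map $\ol{f}\colon\Gamma(M)\to\Gamma(N)$ with $\ol{f}\circ\pi_M=\pi_N\circ f$. This identity is precisely the asserted commuting square, and by construction $\ol{f}$ takes vertices to vertices and edges to edge-paths, hence is a map of graphs.

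The step I expect to be the main obstacle is the geometric claim in the second paragraph: ruling out that $f$ ``straddles'' a JSJ-torus of $N$ from inside a single JSJ-component of $M$, and normalising $f$ so that collars of JSJ-tori are carried coordinatewise into collars of JSJ-tori of $N$ (or collapsed into JSJ-components). The first half is exactly the fact that $\Sigma M$ consists of \emph{components} of $f^{-1}(\Sigma N)$, combined with the fact that for an irreducible $3$-manifold with toroidal boundary the JSJ-decomposition is essentially the decomposition along the frontier of the characteristic submanifold; the second half is a routine normalisation argument in a neighbourhood of $\Sigma M$. Everything else is formal manipulation of the quotient maps $\pi_M$ and $\pi_N$.
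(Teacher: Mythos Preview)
Your proposal is correct and is essentially the intended argument; the paper itself offers no proof beyond the one-line remark that the lemma ``follows immediately from the definition of the JSJ-graph and the definition of a nice map.'' Your unpacking via the quotient maps $\pi_M,\pi_N$ and the descent of $\pi_N\circ f$ along $\pi_M$ is precisely how one makes that remark rigorous, and the step you single out as the main obstacle (that niceness forces each JSJ-block of $M$ into a single JSJ-block of $N$, possibly after a collar normalisation) is indeed the only point with content. In the paper's applications the nice maps are those produced by the Deformation Theorem, where each piece of $f^{-1}(\Sigma N)\setminus\beta K$ and each complementary piece already covers a piece of $N$; in that setting the normalisation you describe is automatic and the induced graph map is visibly combinatorial. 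So your extra care is warranted in general, but the paper is silently working in this more structured situation.
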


\begin{definition}
Let $N$ be an irreducible 3-manifold with empty or toroidal boundary.
We denote by $\beta N$ the union of all Seifert pieces of $\Sigma$ that contain boundary components of $N$.
\end{definition}

The following deformation theorem is a combination of \cite[Proposition~1.2]{GAW92}  and \cite[Deformation Theorem~1.2]{GAW92} which in turn is a consequence of work of Johannson~\cite[p.~127-128]{Jo79} and Waldhausen~\cite[Theorem~6.1]{Wa68}.

\begin{theorem}\label{thm:deformationtheorem}
Let $K$ and $N$ be two irreducible 3-manifolds with non-empty toroidal boundary that do not contain $\pi_1$-injective Klein bottles.
Let $g\colon K\to N$ be a $\pi_1$-injective map. The map $g$ can be homotoped to a map $f$ satisfying the following conditions:
\bn
\item $f$ is nice,
\item if $C$ is a component of either $f^{-1}(\Sigma N)\sm \beta K$ or of  $\ol{K\sm f^{-1}(\Sigma N)}$, then the map $f\colon C\to f(C)$ is a covering.
\en
\end{theorem}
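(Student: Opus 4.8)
The plan is to reduce the statement to two classical pillars --- the enclosing and classification theorems for the characteristic submanifold of Jaco--Shalen and Johannson, and Waldhausen's rigidity theorem for Haken manifolds (\cite[Theorem~6.1]{Wa68}) --- the difficulty being not any single application of these but carrying out the resulting homotopies of $g$ \emph{simultaneously and compatibly} across all JSJ-pieces, which is the content of \cite[Proposition~1.2 and Deformation Theorem~1.2]{GAW92}. To begin, let $\mathcal F_N$ be the frontier of $\Sigma N$ in $N$, a disjoint union of incompressible tori. Since every $\Z^2$ subgroup of $\pi_1N$ is carried into a JSJ-torus group, a peripheral group, or a Seifert-piece group --- all of which lie in $\Sigma N$ --- I would first homotope $g$ so that $g(\partial K)\subset\Sigma N$, and then put $g$ transverse to $\mathcal F_N$, so that $S:=g^{-1}(\mathcal F_N)$ is a properly embedded surface in $K$. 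Using irreducibility of $K$ and $N$, boundary-irreducibility of $K$ (the excluded case $K=S^1\times D^2$ being trivial), incompressibility of $\mathcal F_N$, and injectivity of $g_*$, the standard innermost-disk and boundary-compression moves homotope $g$ so that $S$ is essential: no sphere or disk components, incompressible and $\partial$-incompressible. Each component $F$ of $S$ then injects into $\pi_1K$ and maps into $\pi_1N$ with abelian image, so $\pi_1F$ is abelian and $F$ is an essential torus or annulus in $K$. Cutting $K$ along $S$ yields pieces $K_1,\dots,K_m$, and since each $\op{int}K_i$ is connected and lands in $N\sm\mathcal F_N$, every $K_i$ maps $\pi_1$-injectively into the closure of a single complementary piece of $N$ --- a Seifert piece, a copy of $T^2\times I$, or an atoroidal piece.

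\textbf{Step 2 (enclosing $\Sigma K$: condition (1)).} Every JSJ-torus of $K$ gives an essential singular torus in $N$, and every Seifert JSJ-piece of $K$ gives an essential singular Seifert piece in $N$; by the Enclosing Theorem for the characteristic submanifold each of these can be homotoped into $\Sigma N$. The work is to perform all these enclosings at once so that afterwards the frontier tori of $\Sigma K$ appear among the components of $S$ and $\Sigma K$ itself is a union of components of $g^{-1}(\Sigma N)$, i.e.\ $g$ becomes nice. Here the hypothesis that neither $K$ nor $N$ contains a $\pi_1$-injective Klein bottle is used to guarantee that every Seifert piece in sight is an honest orientable Seifert piece (no twisted $I$-bundle over the Klein bottle), so that this matching is possible; it also rules out a Seifert JSJ-piece of $K$ being carried into an atoroidal piece of $N$, since the fundamental group of such a piece contains a finite-index $\Z\times F$ with $F$ nonabelian free, which does not embed in the fundamental group of an atoroidal $3$-manifold with toroidal boundary.

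\textbf{Step 3 (coverings on the complementary pieces: condition (2)).} Let $C$ be a component of $g^{-1}(\Sigma N)\sm\beta K$ or of $\ol{K\sm g^{-1}(\Sigma N)}$. By Steps~1 and~2, $C$ is a Haken manifold with incompressible boundary consisting of tori and annuli, and $C$ maps $\pi_1$-injectively into a single component of $\Sigma N$ (first case) or into a single atoroidal piece of $N$ (second case). After a homotopy supported near $S\cup\partial K$, the restriction of $g$ to each boundary component of $C$ is a covering --- a torus covering a torus, or an annulus covering an annulus --- since every $\pi_1$-injective map of that kind is. Waldhausen's Theorem~6.1 then homotopes $g|_C$, rel $\partial C$, to a covering of $g(C)$. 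The boundary Seifert pieces $\beta K$ are excluded precisely because on a piece of the form $S^1\times\Sigma$ meeting $\partial K$ the restriction of $g$ need only be fibre-preserving up to homotopy and may fail to be injective on the base surface $\Sigma$, hence need not be a covering.

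\textbf{The main obstacle} is the coherence demanded in Steps~2 and~3: each enclosing and each application of Waldhausen's theorem is routine, but the tori and annuli of $S$ are shared by adjacent pieces, so one must choose all of these homotopies to agree on the common surfaces and assemble them into a single homotopy of $g$ on $K$. Organizing this bookkeeping --- and checking that after Step~2 the decomposition $g^{-1}(\Sigma N)$ is genuinely compatible on the two sides of each shared surface --- is the technical heart of the argument and is what forces the full strength of Johannson's characteristic-pair theory.
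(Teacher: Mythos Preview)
The paper does not prove this theorem at all: it is stated as a quotation from the literature, attributed to \cite[Proposition~1.2 and Deformation Theorem~1.2]{GAW92}, which in turn rest on Johannson~\cite[p.~127--128]{Jo79} and Waldhausen~\cite[Theorem~6.1]{Wa68}. Your outline is a faithful sketch of exactly that route, invoking the same three sources, so in that sense you and the paper agree completely.

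One small point of precision in Step~3: Waldhausen's Theorem~6.1 produces a \emph{homeomorphism}, not a covering. To obtain the covering statement for a component $C$ mapping $\pi_1$-injectively into a piece $P$, one first lifts $g|_C$ to the cover $\widetilde P\to P$ corresponding to the image subgroup $g_*(\pi_1 C)$ and then applies Waldhausen to the lift $C\to\widetilde P$ (which is now a $\pi_1$-isomorphism between Haken manifolds) to obtain a homeomorphism; the composite $C\to\widetilde P\to P$ is then the desired covering. For the Seifert components of $f^{-1}(\Sigma N)\setminus\beta K$, the corresponding step is really Johannson's analysis of essential maps between Seifert pieces rather than Waldhausen per se. None of this changes your overall architecture, and you already flag that the genuine content is the coherence argument carried out in \cite{GAW92}.
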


\section{Proof of Theorem~\ref{thm:grothendieck-rigid}}\label{section:grothendieck-rigid}
For the reader's convenience we recall the statement of Theorem~\ref{thm:grothendieck-rigid}.\\

\noindent 
\textbf{Theorem~\ref{thm:grothendieck-rigid}.} \emph{If $M$ is an irreducible, orientable, compact, connected 3-manifold with empty or toroidal boundary, then $\pi_1(M)$ is Grothendieck rigid.}\\

\subsection{Monomorphisms of 3-manifold groups and profinite completions}

\begin{proposition}\label{prop:2}
Let $f\colon K\to M$ be a nice map between two irreducible  3-manifolds with empty or toroidal boundary such that $\what{f}_*\colon \what{\pi_1(K)}\to \what{\pi_1(M)}$ is an isomorphism. Then the induced map $f_*\colon \pi_1(\Gamma(K))\to \pi_1(\Gamma(M))$, that is defined by 
Lemma~\ref{lem:induced-graph map},  is an epimorphism.
\end{proposition}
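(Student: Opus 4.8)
The plan is to reduce the statement about the graph maps to a combinatorial fact about the associated Bass–Serre trees and their profinite completions, exploiting Theorem~\ref{thm:profinitegraph}. First I would recall that $\pi_1(\Gamma(M))$ is a free group, so to show $f_*\colon \pi_1(\Gamma(K))\to \pi_1(\Gamma(M))$ is an epimorphism it suffices, by Lemma~\ref{lem:sameh1}-type reasoning, to show that the image has finite index in $\pi_1(\Gamma(M))$ together with surjectivity on $H_1$; more robustly, I would argue directly that $f$ is surjective on the level of graphs (hits every vertex and every edge of $\Gamma(M)$) and that the image subgroup surjects onto every finite quotient of $\pi_1(\Gamma(M))$. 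The cleanest route is the latter: use that finite covers of $\Gamma(M)$ correspond to finite-index subgroups, and compare covers of $K$ and of $M$ via the profinite isomorphism $\what{f}_*$.

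The key steps, in order, would be: (1) Observe that the canonical maps $M\to \Gamma(M)$ and $K\to \Gamma(K)$ realize $\Gamma(\cdot)$ as $\mathfrak{T}(\cdot)/\pi_1(\cdot)$, and by Theorem~\ref{thm:profinitegraph}(b2) also as $\what{\mathfrak{T}}(\cdot)/\what{\pi_1(\cdot)}$. (2) Since $f$ is nice, Lemma~\ref{lem:induced-graph map} gives a commuting square $M\to \Gamma(M)$, $K\to \Gamma(K)$, $f$ on both; passing to $\pi_1$ and then to profinite completions, we get that $\what{f}_*\colon \what{\pi_1(K)}\to\what{\pi_1(M)}$ is compatible with the induced map $\pi_1(\Gamma(K))\to\pi_1(\Gamma(M))$ and its completion. (3) The induced map on graphs $f\colon\Gamma(K)\to\Gamma(M)$ lifts to an equivariant map of Bass–Serre trees $\mathfrak{T}(K)\to\mathfrak{T}(M)$, which extends to the profinite trees $\what{\mathfrak{T}}(K)\to\what{\mathfrak{T}}(M)$, equivariant along $\what{f}_*$. (4) Now suppose for contradiction that $f_*(\pi_1(\Gamma(K)))$ is a proper subgroup of $\pi_1(\Gamma(M))$; then there is a nontrivial finite quotient $q\colon\pi_1(\Gamma(M))\to G$ killing the image. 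Pulling back via $M\to\Gamma(M)$ gives a finite regular cover $M'\to M$; because $\what{f}_*$ is an isomorphism, by Lemma~\ref{lem:samefinitequotients} the composite $\pi_1(K)\to\pi_1(M)\to\pi_1(\Gamma(M))\to G$ must be surjective — but this composite factors as $q\circ f_*\circ(\text{projection})$, which by assumption is trivial, contradiction (as $G$ is nontrivial). Hence $f_*$ is surjective.

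The delicate point — the main obstacle — is step (4)'s use of Lemma~\ref{lem:samefinitequotients}: that lemma says $\what{f}_*$ induces a bijection on epimorphisms to finite groups, so I must verify that the \emph{composite} $\pi_1(M)\to\pi_1(\Gamma(M))\to G$ is genuinely an epimorphism onto $G$ (which it is, since $\pi_1(M)\to\pi_1(\Gamma(M))$ is onto and $q$ is onto), and then that its pullback along $f_*$ is \emph{also} an epimorphism onto $G$. The latter is exactly the content of Lemma~\ref{lem:samefinitequotients} applied to $f\colon\pi_1(K)\to\pi_1(M)$. But the pullback is $q\circ f_*\circ p_K$, where $p_K\colon\pi_1(K)\to\pi_1(\Gamma(K))$; if $f_*$ had image inside $\ker q$ then this composite is trivial, not onto $G$, contradicting Lemma~\ref{lem:samefinitequotients}. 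So the argument closes, and the only thing requiring care is checking that the diagram of fundamental groups and profinite completions commutes as claimed — which follows from functoriality of profinite completion and Lemma~\ref{lem:induced-graph map}. I expect no essential use of acylindricity or of Theorem~\ref{thm:profinitegraph}'s finer parts here; those are presumably needed in later propositions, not for this epimorphism statement. A subtlety worth double-checking is whether one needs $\Gamma(M)$ and $\Gamma(K)$ nonempty/nontrivial — if $\Gamma(M)$ has trivial $\pi_1$ the statement is vacuous, and if $\Gamma(K)$ is a point the map $f$ on graphs still must hit all of $\Gamma(M)$, which forces $\Gamma(M)$ to be contractible too; I would handle these degenerate cases at the start.
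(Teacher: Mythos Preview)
Your overall strategy in step~(4) --- pull the question down through the commuting square
\[
\xymatrix@R0.5cm{\pi_1(K)\ar[r]^{f_*}\ar[d]_{p_K} & \pi_1(M)\ar[d]^{p_M}\\ \pi_1(\Gamma(K))\ar[r]_{f_*} & \pi_1(\Gamma(M))}
\]
and use that $\what{f}_*$ is an isomorphism to force surjectivity onto any finite quotient of $\pi_1(M)$ --- is exactly the paper's approach. But there is a real error in your execution. You assert that if $H:=f_*(\pi_1(\Gamma(K)))$ is a proper subgroup of the free group $F:=\pi_1(\Gamma(M))$ then ``there is a nontrivial finite quotient $q\colon F\to G$ killing the image''. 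That is false in general: a proper subgroup of a free group need not lie in the kernel of any nontrivial finite quotient. For instance, take any non-normal index-$3$ subgroup $H\subset F$; its normal closure is all of $F$, so no nontrivial quotient of $F$ annihilates $H$. Thus the contradiction you derive (that $q\circ p_M\circ f$ would be trivial rather than onto) never gets off the ground.

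The repair is not to kill $H$, only to separate it: since $H$ is finitely generated (as the image of the finitely generated group $\pi_1(\Gamma(K))$), M.~Hall's theorem says $H$ is separable in $F$, hence contained in a \emph{proper finite-index} subgroup $\Gamma\subsetneq F$. Then $p_M^{-1}(\Gamma)$ is a proper finite-index subgroup of $\pi_1(M)$ containing $f_*(\pi_1(K))$, and Lemma~\ref{lem:same-finite-index-subgroups} (or equivalently Lemma~\ref{lem:samefinitequotients} applied to the non-surjective composite $\pi_1(K)\to\pi_1(M)\to F/\op{core}(\Gamma)$) gives the contradiction. This is precisely the paper's proof. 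Note also that your steps (1)--(3) about profinite Bass--Serre trees play no role and can be dropped entirely; only the commuting square from Lemma~\ref{lem:induced-graph map}, Hall's separability theorem, and one of the elementary lemmas on profinite completions are needed.
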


\begin{proof}
By Lemma~\ref{lem:induced-graph map} we have the following commutative diagram
\[ \xymatrix@R0.6cm{ \pi_1(K)\ar[d] \ar[r]^{f_*} & \pi_1(M)\ar@{->>}[d]^p \\
\pi_1(\Gamma(K))\ar[r] & \pi_1(\Gamma(M)).}\]
The group $\pi_1(\Gamma(M))$ is a free group.
It follows from \cite[Theorem 5.1]{Ha49} that every finitely generated subgroup of $\pi_1(\Gamma(M))$ is separable.
Therefore, if the induced map $f_*\colon \pi_1(\Gamma(K))\to  \pi_1(\Gamma(M))$ is not an epimorphism, then there exists a finite-index proper subgroup $\Gamma$ of $ \pi_1(\Gamma(M))$ with 
$f_*(\pi_1(\Gamma(K)))\subset \Gamma$. But this means that $f_*(\pi_1(K))$ is contained in the finite-index proper subgroup $p^{-1}(\Gamma)$. But by Lemma~\ref{lem:same-finite-index-subgroups} this contradicts our hypothesis that  $\what{f}_*\colon \what{\pi_1(K)}\to \what{\pi_1(M)}$ is an isomorphism.
\end{proof}

\subsection{Proof of Theorem~\ref{thm:grothendieck-rigid} - reduction to the irreducible case}

Clearly the statement of Theorem~\ref{thm:grothendieck-rigid} is trivial if $M\cong S^1\times S^2$, thus we can henceforth assume that $M$ is irreducible.

\subsection{Proof of Theorem~\ref{thm:grothendieck-rigid} - reduction to the case that $K$ has non-empty toroidal boundary}
Let $M$ be an irreducible 3-manifold with empty or toroidal boundary and let $G\subset \pi_1(M)$ be a finitely generated subgroup  such that the induced map $\what{G}\to \what{\pi_1(M)}$ is  an isomorphism.
We need to show that $G =\pi_1(M)$. Clearly the statement is trivial if $\pi_1(M)$ is finite, thus we can assume that $M$ is aspherical. This implies that $\pi_1(M)$ is torsion-free and thus $G$ is also torsion-free.


Let $q\colon M_{G} \to M$ be the  covering of $M$ with $\pi_1(M_{G}) = G$.  By hypothesis  $M$ is irreducible. By the equivariant sphere theorem, see \cite[Theorem~3~on~p.~647]{MSY82}, the manifold 
$M_{G}$ is also irreducible. 

By Scott's core theorem~\cite{Sco73} $M_{G}$ admits a compact submanifold $K$ such that $\pi_1(K)\to \pi_1(M_{G})$ is injective and the image of $\pi_1(K)$ equals $G$. (Here the 3-manifold $K$ can a priori have any type of boundary.)  We will now identify $\pi_1(K)$ with $G$. Since $M_G$ is irreducible we can cap off in $M_G$ all boundary components of $K$ that are spheres. Thus we can assume that all boundary components of $K$ have genus at least one. 
Since $M_G$ is irreducible we see that $K$ itself is irreducible.
Since $G$ is torsion-free and non-trivial we see that $K$ is in fact aspherical.


\begin{claim}\
\bn
\item for all $i$ the map $q$ induces an isomorphism $H_i(K;\Z)\to H_i(M;\Z)$,
\item we have $\chi(K)=\chi(M)$.
\en
\end{claim}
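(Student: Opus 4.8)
The plan is to derive both statements from the fact that $\what{q}_*\colon\what{G}=\what{\pi_1(K)}\to\what{\pi_1(M)}$ is an isomorphism, using the goodness of 3-manifold groups (Theorem~\ref{thm:good}) to pass from profinite cohomology back to ordinary group cohomology. For (1), I would first invoke Lemma~\ref{lem:sameh1} to conclude that $q_*\colon H_1(K;\Z)\to H_1(M;\Z)$ is already an isomorphism. For the higher homology groups the cleanest route is cohomological: for any finite abelian group $A$, the isomorphism $\what{G}\cong\what{\pi_1(M)}$ induces an isomorphism $H^i(\what{\pi_1(M)};A)\to H^i(\what{G};A)$, and since both $\pi_1(M)$ and $G$ are good (Theorem~\ref{thm:good}), this gives an isomorphism $H^i(\pi_1(M);A)\to H^i(G;A)=H^i(K;A)$ for all $i$ and all finite abelian $A$. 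Because $K$ and $M$ are aspherical with torsion-free fundamental group, these are the singular cohomology groups $H^i(M;A)\to H^i(K;A)$. Now I would feed this into the universal coefficient theorem together with the known isomorphism in degrees $0$ and $1$: knowing $H^i(M;A)\cong H^i(K;A)$ for all finite $A$ and all $i$, an inductive argument on $i$ (using that $K$, $M$ are compact, hence of finite homological type, so each $H_i$ is a finitely generated abelian group) pins down $H_i(K;\Z)\to H_i(M;\Z)$ as an isomorphism; concretely, apply the argument in the proof of Lemma~\ref{lem:sameh1} degree by degree, comparing torsion via coefficients $\Z/p^n$ and free rank via $\Z/p$ for large $p$.

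A subtlety I need to handle is that the map $q_*\colon H_i(K;\Z)\to H_i(M;\Z)$ runs in the direction induced by the covering (or rather by the inclusion $K\hookrightarrow M_G$ followed by $q$), so I must make sure I am applying the cohomology comparison to the correct induced map; the commuting square of profinite completions guarantees the ordinary map and its profinite counterpart are compatible, so this is only a bookkeeping point. Another point: $M$ may be closed, in which case $H_3(M;\Z)=\Z$; the argument must still show $H_3(K;\Z)\to H_3(M;\Z)$ is an isomorphism (in particular $K$ is closed too), which again follows from the cohomology comparison in top degree together with Poincar\'e duality and the $H_1$ statement, or directly from the fact that a finite-index would-be surjection on $H_3$ combined with the profinite isomorphism forces equality.

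For (2), the Euler characteristic, I would simply note that once $q_*\colon H_i(K;\Q)\to H_i(M;\Q)$ is an isomorphism for every $i$ — which follows from part (1) by tensoring with $\Q$ — we get $b_i(K)=b_i(M)$ for all $i$, and hence
\[
\chi(K)=\sum_i(-1)^i b_i(K)=\sum_i(-1)^i b_i(M)=\chi(M).
\]
(Both sums are finite since $K$ and $M$ are compact 3-manifolds.)

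The main obstacle is part (1) in degrees $\geq 2$: the isomorphism of profinite completions only directly gives an isomorphism on $H_1$ via Lemma~\ref{lem:sameh1}, and promoting this to all degrees genuinely requires goodness of the two groups so that the profinite cohomology isomorphism descends to ordinary cohomology with finite coefficients, followed by a careful universal-coefficient / finite-generation argument to recover integral homology. Everything else — Lemma~\ref{lem:sameh1}, asphericity, the Betti number computation of $\chi$ — is routine.
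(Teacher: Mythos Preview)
Your proposal is correct and follows essentially the same route as the paper: use goodness of $\pi_1(K)$ and $\pi_1(M)$ (Theorem~\ref{thm:good}) together with the profinite isomorphism to obtain $q^*\colon H^i(M;A)\to H^i(K;A)$ an isomorphism for all $i$ and all finite abelian coefficients, then invoke asphericity, the Universal Coefficient Theorem, and finite generation to upgrade this to an integral homology isomorphism, from which $\chi(K)=\chi(M)$ is immediate. The paper streamlines slightly by working only with $A=\F_p$ for all primes $p$ and skipping the separate appeal to Lemma~\ref{lem:sameh1} for $H_1$, but the argument is the same.
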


\begin{proof}[Proof of the Claim]
Let $i\in \{0,1,2,3\}$ and let $p$ be a prime. 
We consider the following commutative diagram
\[ \xymatrix@R0.5cm@C1.1cm{ H^i(\pi_1(M);\F_p)\ar[d]\ar[r]^{q^*} & H^i(\pi_1(K);\F_p)\ar[d] \\ H^i(\what{\pi_1(M)};\F_p)\ar[r]^{\what{q}^*}&H^i(\what{\pi_1(K)};\F_p).}\]
By Theorem~\ref{thm:good} the groups $\pi_1(M)$ and $\pi_1(K)$ are good. If we apply the goodness property to $A=\F_p$ with the trivial action we obtain that in the above commutative diagrams the vertical maps are isomorphisms. By our  hypothesis on $K$ and $M$ the bottom horizontal map is an isomorphism. It follows that  the top horizontal map is also an isomorphism. Since $K$ and $M$ are aspherical we deduce that  the maps $q^*\colon H^i(M;\F_p)\to H^i(K;\F_p)$ are isomorphisms for any $i$ and any prime $p$. By the Universal Coefficient Theorem and the classification of finitely generated abelian groups this implies the first statement and evidently then also the second statement.
This concludes the proof of the claim.
\end{proof}

Now suppose that $M$ is closed. As we just showed in claim (1), this implies that $K$ is also closed 
and the map $q_*\colon H_3(K;\Z)\to H_3(M;\Z)$ is an epimorphism, i.e.\ $p\colon K\to M$ is a map of degree one. It follows from a standard argument, see e.g.\ \cite[Lemma~15.12]{He76}, that $q_*\colon \pi_1(K)\to \pi_1(M)$ is an epimorphism. The hypothesis that $\what{q_*}\colon \what{\pi_1(K)}\to \what{\pi_1(M)}$ is an isomorphism together with the fact that $\pi_1(K)$ and $\pi_1(M)$ are residually finite also implies that $q_*\colon \pi_1(K)\to \pi_1(M)$ is a monomorphism. Thus we see that $q_*\colon \pi_1(K)\to \pi_1(M)$ is an isomorphism, i.e.\ $G=\pi_1(M)$, as desired.

Now suppose that $M$ has non-trivial boundary.
By the above claim and by our hypothesis on the boundary of $M$ we have $\chi(K)=\chi(M)=0$. A standard Poincar\'e duality argument shows that $\chi(\partial K)=2\chi(K)$. Thus we see that 
$\chi(\partial K)=0$. Since $K$ has no spherical boundary components we see that all boundary components of $K$ are tori.

\subsection{Proof of Theorem~\ref{thm:grothendieck-rigid} - reduction to clean 3-manifolds with girth $\geq 3$}
In light of the previous section and of Theorem~\ref{thm:deformationtheorem} (1) it remains to prove the following proposition.

\begin{proposition}\label{prop:proof-section-2}
Let $f\colon K\to M$ be a nice map between two  irreducible 3-manifolds $K$ and $M$ that both have non-empty toroidal boundary. If $f$  induces an isomorphism of profinite completions,
then $f_*\colon \pi_1(K)\to \pi_1(M)$ is an isomorphism.
\end{proposition}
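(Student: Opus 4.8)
The plan is to prove Proposition~\ref{prop:proof-section-2} by an induction on the structure of the JSJ-graphs, using the profinite Bass--Serre theory (Theorem~\ref{thm:profinitegraph}), the acylindricity results (Lemmas~\ref{lem:action-is-acylindrical} and~\ref{lem:fixed-vertex}), and the efficiency of the JSJ-graph of groups (Theorem~\ref{thm:wz10}). We already know from Proposition~\ref{prop:2} that the induced map $f_*\colon \pi_1(\Gamma(K))\to\pi_1(\Gamma(M))$ on JSJ-graph groups is an epimorphism of free groups; combined with Lemma~\ref{lem:sameh1} applied to the profinite isomorphism and the fact that $H_1$ of a free group detects its rank, one should get that $f_*$ on graph groups is actually an isomorphism, and then with a little more work (using separability of finitely generated subgroups of free groups, as in the proof of Proposition~\ref{prop:2}, together with Lemma~\ref{lem:same-finite-index-subgroups}) that the underlying map of graphs $\Gamma(K)\to\Gamma(M)$ is itself an isomorphism. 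This is where the reduction to girth $\ge 3$ (Proposition~\ref{prop:JSJ-girth}) and to clean manifolds (Proposition~\ref{prop:covered-by-clean}) is used: passing to appropriate finite covers — which is legitimate because a profinite isomorphism restricts to profinite isomorphisms of corresponding finite-index subgroups by Lemma~\ref{lem:virtiso} — we may assume $K$ and $M$ are clean with JSJ-graphs of girth at least $3$.

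Once the graphs are identified, the heart of the argument is to show that $f$ carries each JSJ-component of $K$ to the corresponding JSJ-component of $M$ inducing an isomorphism on fundamental groups, and similarly on edge (torus) groups; then Lemma~\ref{lem:epimorphism} finishes the proof. To locate the vertex groups profinitely, I would use Theorem~\ref{thm:profinitegraph}: the profinite completion $\what{\pi_1(M)}$ acts on the profinite Bass--Serre tree $\what{\mathfrak T}(M)$, vertex stabilizers are conjugates of profinite completions of JSJ-component groups, and edge stabilizers are conjugates of profinite completions of $\Z^2$'s. The profinite isomorphism $\what f_*$ transports the action of $\what{\pi_1(K)}$ on $\what{\mathfrak T}(K)$ to an action on $\what{\mathfrak T}(M)$. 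Using Lemma~\ref{lem:action-is-acylindrical} (the action is $2$-acylindrical, because the manifolds are clean with non-empty boundary), and Lemma~\ref{lem:fixed-vertex} (a profinite completion of a hyperbolic piece, or of a product $S^1\times\Sigma$, acting acylindrically on a profinite tree with abelian edge stabilizers fixes a unique vertex), one sees that each profinite vertex group $\what{\pi_1(K_v)}$ must fix a unique vertex of $\what{\mathfrak T}(M)$, hence be conjugate into a unique $\what{\pi_1(M_{w})}$. Because the map of graphs is already known to be an isomorphism and the action on the discrete (dense) part $\mathfrak T(M)\subset\what{\mathfrak T}(M)$ is compatible, $w$ is forced to be $f(v)$, giving monomorphisms $\what{\pi_1(K_v)}\hookrightarrow\what{\pi_1(M_{f(v)})}$ of profinite completions.

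To upgrade these to isomorphisms, I would use the second part of Theorem~\ref{thm:deformationtheorem}: after the homotopy making $f$ nice, the restriction of $f$ to each component $C$ of $f^{-1}(\Sigma M)\sm\beta K$ or of $\ol{K\sm f^{-1}(\Sigma M)}$ is a covering onto its image. A covering of compact aspherical $3$-manifolds that induces an isomorphism (or more precisely, an injection that is onto after profinite completion, hence a finite covering whose degree is forced to be $1$) on profinite completions must be a homeomorphism: indeed a finite covering of degree $d$ induces an index-$d$ inclusion of profinite completions, which is an isomorphism only if $d=1$. So it remains to check that $\what f_*$ restricts to an isomorphism on each piece. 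For the vertex groups this is the content of the previous paragraph combined with the covering-space observation; for the edge groups, the $\Z^2$'s, one uses that $f$ maps JSJ-tori to JSJ-tori (niceness and the identification of graphs) and that a profinite isomorphism between the ambient groups restricted to the abelian edge subgroups, which are their own profinite-closure intersections by separability (Theorem~\ref{thm:wz10}(2)), must be an isomorphism of $\what{\Z^2}$'s covering a self-covering of $T^2$ of degree $1$. Finally, a subtlety: the pieces of $K$ need not individually have toroidal boundary in the sense required to invoke all the lemmas directly — $\beta K$ consists of Seifert pieces meeting $\partial K$ — so I would treat the Seifert pieces touching the boundary using part~(2) of Lemma~\ref{lem:fixed-vertex} and the hyperbolic and interior Seifert pieces using the product structure guaranteed by cleanness.

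The main obstacle, I expect, is the bookkeeping around the boundary pieces $\beta K$ and making the acylindricity/unique-fixed-vertex argument rigorous for \emph{every} vertex group simultaneously and compatibly with the already-established graph isomorphism: one must rule out that a vertex group of $K$ is sent with "too small" image (a proper finite-index or infinite-index subgroup) into a vertex group of $M$, and the tool for that is precisely the degree-one conclusion from Theorem~\ref{thm:deformationtheorem}(2) together with Euler characteristic or first-homology comparisons (as in the Claim earlier in the paper). Ensuring these local isomorphisms are mutually compatible on overlaps — i.e.\ that they glue to the required morphism of graphs of groups feeding into Lemma~\ref{lem:epimorphism} — is the delicate part, and is where the hypothesis "clean" and "girth $\ge 3$" really earns its keep by preventing pathological identifications of adjacent pieces or of an edge with itself.
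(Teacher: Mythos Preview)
Your plan assembles the right ingredients --- pass to clean covers with girth $\ge 3$, use profinite Bass--Serre theory plus acylindricity to locate vertex groups, match edge groups, and invoke Lemma~\ref{lem:epimorphism} --- but the logical order and one key step differ from the paper in a way that leaves a real gap.

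First, the paper does \emph{not} establish the graph isomorphism $\Gamma(K)\cong\Gamma(M)$ before matching vertex groups; rather, the vertex bijection is \emph{how} it obtains the graph isomorphism. Lemma~\ref{lem:vertices bijection} proves $f$ is a bijection on vertices via the profinite Bass--Serre argument, and only then does Lemma~\ref{lem:edges} deduce bijection on edges (injectivity from girth $\ge 3$ combined with the already-known vertex bijection; surjectivity from the count $e-v\ge e'-v'$ coming from Proposition~\ref{prop:2}). Your proposed route --- first get $\pi_1(\Gamma(K))\cong\pi_1(\Gamma(M))$ as free groups and then lift this to an isomorphism of the graphs themselves --- does not work as written: an isomorphism of graph \emph{groups} says nothing about an isomorphism of \emph{graphs} (a tree and a point both have trivial $\pi_1$), and separability in free groups does not repair this without already knowing the vertex correspondence.

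Second, and more seriously, your acylindricity argument runs in only one direction. Letting $\what{\pi_1(K_v)}$ act on $\what{\mathfrak T}(M)$ yields an \emph{inclusion} $\what{\pi_1(K_v)}\subset\what{\pi_1(M_{f(v)})}$ (up to conjugacy), not equality. You then try to close the gap via Theorem~\ref{thm:deformationtheorem}(2), arguing that a finite covering whose induced map on profinite completions is an isomorphism has degree $1$ --- but you have not shown that map is onto, only that it is injective, so this is circular. The paper instead runs the acylindricity argument \emph{both ways}: via $\what f_*^{\,-1}$ it lets each $\what{\pi_1(M_v)}$ act on $\what{\mathfrak T}(K)$, producing the sandwich
\[
\what f_*^{\,-1}\bigl(\what{\pi_1(M_v)}\bigr)\ \subset\ \what{\pi_1(K_w)}\ \subset\ \what f_*^{\,-1}\bigl(\what{\pi_1(M_{f(w)})}\bigr),
\]
which together with uniqueness of the fixed vertex forces $f(w)=v$, simultaneously proving the vertex bijection \emph{and} the equality $\what f_*\bigl(\what{\pi_1(K_w)}\bigr)=\what{\pi_1(M_{f(w)})}$. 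From there the paper does not use covering-degree arguments at all: it invokes the already-known Grothendieck rigidity of the pieces (Long--Reid for cusped hyperbolic; LERF $\Rightarrow$ rigidity for Seifert) to conclude $f_*\colon\pi_1(K_v)\to\pi_1(M_{f(v)})$ is an isomorphism. This also dissolves your $\beta K$ worry, since Theorem~\ref{thm:deformationtheorem}(2) is never needed. The edge-group isomorphisms then follow from maximality of the torus subgroups in the vertex groups, not from separability.
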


Let $f\colon K\to M$ be a nice map between two  irreducible 3-manifolds $K$ and $M$ that have non-empty toroidal boundary. Lemmas~\ref{lem:samefinitequotients}, \ref{lem:virtiso} and \ref{lem:virtiso-2} together with Propositions \ref{prop:covered-by-clean} and \ref{prop:JSJ-girth}  show that after  possibly going to a finite cover, it suffices to prove Proposition \ref{prop:proof-section-2} when $K$ and $M$ are clean manifolds and their JSJ-graphs have girth $\geq 3$. (Note that here we use the fact that the finite cover of a graph with girth $\geq 3$ has again girth $\geq 3$.) So we we can reduce the proof of Theorem~\ref{thm:grothendieck-rigid}  to the following statement.

\begin{proposition}\label{prop:proof-section-3}
Let $f\colon K\to M$ be a nice map between two  clean 3-manifolds $K$ and $M$ with non-empty toroidal boundary and whose  JSJ-graphs $\Gamma(K)$ and  $\Gamma(M)$ have girth $\geq 3$. 
If $f$  induces an isomorphism of profinite completions, then $f_*\colon \pi_1(K)\to \pi_1(M)$ is an isomorphism.
\end{proposition}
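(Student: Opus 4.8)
The plan is to use Proposition~\ref{prop:2} together with the profinite Bass–Serre theory of Theorem~\ref{thm:profinitegraph} to promote the profinite isomorphism $\what f_*$ to an honest isomorphism of JSJ-graphs of groups, and then to conclude via Lemma~\ref{lem:epimorphism} and Lemma~\ref{lem:virtiso-2}. First I would invoke Proposition~\ref{prop:2} to see that the induced map $f_*\colon \pi_1(\Gamma(K))\to\pi_1(\Gamma(M))$ of free groups is surjective; since $\what{f_*}$ is an isomorphism, Lemma~\ref{lem:sameh1} forces $H_1(\Gamma(K);\Z)\cong H_1(\Gamma(M);\Z)$, so the first Betti numbers agree and the epimorphism of free groups must be an isomorphism. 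Combined with the Euler-characteristic bookkeeping already set up in the previous subsection (which pins down $\chi(K)=\chi(M)=0$ and forces the same number of JSJ-tori and components via the girth $\geq 3$ hypothesis, which guarantees the underlying graphs carry no digons or loops and hence are reconstructed from their homotopy type and vertex/edge counts), this should yield that $f\colon\Gamma(K)\to\Gamma(M)$ is an isomorphism of graphs. This is the place the girth hypothesis does its work: without it a map of graphs inducing a $\pi_1$-isomorphism need not be a homeomorphism.

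Next I would analyze what $\what{f_*}$ does on the vertex and edge groups of the graph of groups. By Theorem~\ref{thm:profinitegraph}(a) the profinite completion $\what{\pi_1(K)}$ is the fundamental group of the finite graph of profinite groups $\what{\GG}(K)$, and similarly for $M$; the isomorphism $\what f_*$ is compatible with the graph isomorphism just established, so it intertwines the two profinite Bass–Serre trees $\what{\mathfrak T}(K)$ and $\what{\mathfrak T}(M)$ equivariantly. Using Theorem~\ref{thm:profinitegraph}(b3), the stabilizer of a vertex of $\what{\mathfrak T}(M)$ is (conjugate to) the profinite completion of the corresponding JSJ-vertex group, and likewise in $K$; equivariance of $\what f_*$ then identifies $\what{\pi_1(K_v)}$ with $\what{\pi_1(M_{f(v)})}$ for each vertex $v$, and similarly $\what{\pi_1(T)}\cong\what{\pi_1(f(T))}$ for each JSJ-torus $T$. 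Here the acylindricity input — Lemma~\ref{lem:action-is-acylindrical} (the action is $2$-acylindrical since $K$ and $M$ are clean with non-empty boundary) together with Lemma~\ref{lem:fixed-vertex} (each hyperbolic piece and each product Seifert piece fixes a \emph{unique} vertex of a profinite tree with abelian edge stabilizers) — is what rigidifies the situation: it guarantees that $\what f_*$ cannot move vertex groups around in an uncontrolled way and that the vertex–group identification is canonical rather than merely set-theoretic.

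Then I would use Theorem~\ref{thm:deformationtheorem}(2): since $f$ is nice, on each JSJ-component $K_v$ (and on each JSJ-torus) the restriction $f\colon K_v\to f(K_v)=M_{f(v)}$ is a covering map of 3-manifolds. A covering map between aspherical 3-manifolds that induces an isomorphism of profinite completions must be a homeomorphism: the cover has the same finite-index-subgroup lattice, so it is one-sheeted. (For Seifert pieces this is immediate from the product structure; for hyperbolic pieces one uses that $\what{\pi_1(K_v)}\cong\what{\pi_1(M_{f(v)})}$ forces equal volume, or more elementarily that a finite cover strictly increases $b_1$ or has larger index, contradicting Lemma~\ref{lem:sameh1} applied piecewise.) Hence $f_*\colon\pi_1(K_v)\to\pi_1(M_{f(v)})$ is an isomorphism for every vertex $v$, and similarly $f_*\colon\pi_1(T)\to\pi_1(f(T))$ is an isomorphism for every edge. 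Finally, with the graph isomorphism from the first paragraph and the vertex- and edge-group isomorphisms just obtained, Lemma~\ref{lem:epimorphism} gives that $f_*\colon\pi_1(K)=\pi_1(\GG(K))\to\pi_1(\GG(M))=\pi_1(M)$ is an isomorphism, completing the proof.

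The main obstacle I expect is the middle step: showing that the profinite isomorphism $\what f_*$ genuinely induces, vertex by vertex, isomorphisms of the \emph{finitely generated} vertex and edge groups, rather than merely matching their profinite completions along some abstract identification. This is exactly where the acylindricity of the action on the profinite tree (Lemmas~\ref{lem:action-is-acylindrical} and \ref{lem:fixed-vertex}) and the efficiency/injectivity statements of Theorem~\ref{thm:profinitegraph} must be combined carefully to pin down vertex stabilizers and verify that $\what f_*$ respects the Bass–Serre tree structure; and it is where the niceness of $f$ (via Theorem~\ref{thm:deformationtheorem}(2)) must be leveraged to translate the resulting profinite-level matching back into honest 3-manifold coverings that one can then recognize as homeomorphisms.
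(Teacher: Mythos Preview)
Your overall plan---reduce to a graph-of-groups isomorphism and then invoke Lemma~\ref{lem:epimorphism}---is the paper's plan, but the order of your argument is inverted in a way that leaves two genuine gaps. In your first paragraph you assert that the $\pi_1$-isomorphism of the JSJ-graphs together with girth $\geq 3$ and ``Euler-characteristic bookkeeping'' forces $f\colon\Gamma(K)\to\Gamma(M)$ to be a graph isomorphism. No such bookkeeping is available: $\chi(K)=\chi(M)=0$ carries no information about the number of JSJ-pieces, Lemma~\ref{lem:sameh1} applies to $\pi_1(K)\to\pi_1(M)$ and not to the map of graph groups (so you do not even know $b_1(\Gamma(K))=b_1(\Gamma(M))$ at this stage), and a map of graphs of girth $\geq 3$ can collapse a subtree while inducing a $\pi_1$-isomorphism. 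The paper proceeds in the opposite order: it uses the acylindricity input \emph{first}, letting each $\what{\pi_1(M_v)}$ act on $\what{\mathfrak T}(K)$ via $\what f_*^{\,-1}$ and applying Lemma~\ref{lem:fixed-vertex} to produce a unique fixed vertex $w$ with $f(w)=v$; this yields the vertex bijection directly. Only then does girth $\geq 3$ enter, to show edge-injectivity (two edges with the same image would share both endpoints and form a digon), and Proposition~\ref{prop:2} supplies the inequality $e-v\geq e'-v'$ needed to close the edge count.

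Your third-paragraph appeal to Theorem~\ref{thm:deformationtheorem}(2) is also not justified: that statement gives covering behaviour only on components of $f^{-1}(\Sigma M)\setminus\beta K$ and of $\ol{K\setminus f^{-1}(\Sigma M)}$, so Seifert JSJ-pieces of $K$ meeting $\partial K$ are explicitly excluded and you cannot conclude that every $f|_{K_v}$ is a finite cover. The paper avoids this issue altogether. The vertex-bijection argument above already shows that $\what f_*$ restricts to an isomorphism $\what{\pi_1(K_v)}\to\what{\pi_1(M_{f(v)})}$; since each vertex group $\pi_1(M_{f(v)})$ is itself Grothendieck rigid (Seifert pieces are LERF by Scott, cusped hyperbolic pieces by Long--Reid), the injection $f_*\colon\pi_1(K_v)\hookrightarrow\pi_1(M_{f(v)})$ must be onto. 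The edge-group isomorphisms then drop out because JSJ-torus groups are maximal abelian in the adjacent vertex groups.
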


The proof of  Proposition \ref{prop:proof-section-3} will be given in the next section.

\subsection{Proof of  Proposition~\ref{prop:proof-section-3}}
Let $f\colon K\to M$ be a nice map between two  clean 3-manifolds $K$ and $M$  with non-empty toroidal boundary,  whose  JSJ-graphs $\Gamma(K)$ and  $\Gamma(M)$ have girth $\geq 3$. We denote by $f\colon \Gamma(K)\to \Gamma(M)$ the map that is induced by Lemma~\ref{lem:induced-graph map}.

\begin{lemma}\label{lem:vertices bijection} The map $f\colon \Gamma(K)\to \Gamma(M)$ induces a bijection $V(\Gamma(K)) \to V(\Gamma(M))$ on the set of vertices. 
\end{lemma}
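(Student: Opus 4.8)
The plan is to exploit the fact that by Proposition~\ref{prop:2} the induced graph map $f_*\colon \pi_1(\Gamma(K))\to\pi_1(\Gamma(M))$ is an epimorphism of free groups, and then to combine this with a counting argument on vertices coming from the efficiency of the JSJ-graph of groups (Theorem~\ref{thm:wz10}) and the profinite Bass-Serre tree machinery (Theorem~\ref{thm:profinitegraph}). First I would argue that $f\colon\Gamma(K)\to\Gamma(M)$ is \emph{surjective} on vertices: if some vertex $v\in V(\Gamma(M))$ were not hit, the corresponding JSJ-component $M_v$ would contribute a generator to $\pi_1(\Gamma(M))$ not in the image of $f_*$ after collapsing the complementary subgraph, contradicting that $f_*$ is onto. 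More carefully, since $\Gamma(M)$ has girth $\geq 3$ it is in particular simplicial, and a vertex missed by $f$ forces the image of $f$ to lie in a proper subgraph, hence $f_*(\pi_1(\Gamma(K)))$ lies in a proper free factor / proper finite-index-after-separability subgroup of $\pi_1(\Gamma(M))$ — this is exactly the mechanism already used in Proposition~\ref{prop:2}. So surjectivity on vertices is essentially already in hand.

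The real content is \emph{injectivity} on vertices, and here the plan is to compare stabilizers in the profinite Bass-Serre trees. Let $\what{f}\colon\what{\pi_1(K)}\to\what{\pi_1(M)}$ be the isomorphism; by Theorem~\ref{thm:profinitegraph}(b) it is compatible with the actions on $\what{\mathfrak T}(K)$ and $\what{\mathfrak T}(M)$, and it covers the graph map $f\colon\Gamma(K)\to\Gamma(M)$ on the quotients. Suppose two distinct vertices $v_1\neq v_2$ of $\Gamma(K)$ map to the same vertex $w$ of $\Gamma(M)$. Choosing lifts $\wti v_1,\wti v_2\in V(\what{\mathfrak T}(K))$ in the same $\what{\pi_1(K)}$-orbit is impossible (their images in $\Gamma(K)$ differ), yet $\what f$ sends them to vertices of $\what{\mathfrak T}(M)$ lying over $w$, hence in the same $\what{\pi_1(M)}$-orbit; pulling back the conjugating element through the isomorphism $\what f$ would put $\wti v_1,\wti v_2$ in the same $\what{\pi_1(K)}$-orbit, a contradiction. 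The point that needs care is that $\what f$ really does induce an equivariant map of profinite trees covering $f$ — this should follow from the functoriality built into Theorem~\ref{thm:profinitegraph} together with the fact that $f$ is nice (Lemma~\ref{lem:induced-graph map}), so that the morphism of graphs of groups $\GG(K)\to\GG(M)$ that induces $f_*$ on $\pi_1$ induces the corresponding morphism of profinite graphs of groups.

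An alternative, and perhaps cleaner, route I would keep in reserve: use Lemma~\ref{lem:same-finite-index-subgroups} and the efficiency in Theorem~\ref{thm:wz10} to show that $f$ induces a bijection on the sets of conjugacy classes of maximal peripheral / JSJ-vertex subgroups. Since vertices of $\Gamma(K)$ correspond bijectively to $\pi_1(K)$-conjugacy classes of JSJ-vertex subgroups (and likewise for $M$), and since $\what f$ being an isomorphism carries vertex groups to vertex groups (by the acylindricity package, Lemmas~\ref{lem:action-is-acylindrical} and \ref{lem:fixed-vertex}, which pin down that the profinite completion of a JSJ-vertex group fixes a \emph{unique} vertex of the profinite tree), one gets that $f$ is injective on vertices directly from the injectivity of the induced correspondence on conjugacy classes of vertex groups. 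Combined with the surjectivity established above, this yields the bijection.

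The main obstacle I anticipate is not the surjectivity — that is a routine consequence of Proposition~\ref{prop:2} and separability in free groups — but rather making the injectivity argument rigorous: one must be sure that $\what f$ genuinely respects the tree structures and the projections to $\Gamma(K),\Gamma(M)$, and that ``same vertex downstairs'' is equivalent to ``same orbit upstairs'' in the \emph{profinite} setting, where orbits and cosets behave subtly. The uniqueness statements in Lemma~\ref{lem:fixed-vertex}, which rely on acylindricity (Lemma~\ref{lem:action-is-acylindrical}, valid precisely because $K$ and $M$ are clean with non-empty boundary), are what make this work, so I expect the proof to invoke them to identify the images of vertex groups unambiguously.
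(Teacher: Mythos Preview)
Your surjectivity argument has a genuine gap. A vertex $v\in V(\Gamma(M))$ missed by $f$ forces the image $f(\Gamma(K))$ into a proper subgraph, but a proper subgraph can carry all of $\pi_1(\Gamma(M))$: if $v$ is a leaf (or lies in a tree part of $\Gamma(M)$), deleting it does not change the fundamental group. The girth $\geq 3$ hypothesis excludes loops and double edges, but not leaves, so Proposition~\ref{prop:2} alone does not yield surjectivity on vertices.

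Your injectivity argument also has a gap where you anticipated one: you need a $\what{f_*}$-equivariant map $\what{\mathfrak T}(K)\to\what{\mathfrak T}(M)$ covering $f\colon\Gamma(K)\to\Gamma(M)$, and you do not construct it. Functoriality of the profinite Bass--Serre construction gives a map induced by a morphism of graphs of profinite groups, but you have not checked that the resulting map on quotients is the graph map $f$ coming from Lemma~\ref{lem:induced-graph map}, nor that the tree map is injective (so that distinct lifts stay distinct).

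The paper avoids both issues by running the argument in the opposite direction, and this is the key idea you are missing. Rather than pushing vertices of $K$ forward, one pulls vertex groups of $M$ back: via the isomorphism $\what{f_*}^{-1}$, the group $\what{\pi_1(M)}$ acts $2$-acylindrically on $\what{\mathfrak T}(K)$ with abelian edge stabilizers. For each $v\in V(\Gamma(M))$, Lemma~\ref{lem:fixed-vertex} then says $\what{f_*}^{-1}(\what{\pi_1(M_v)})$ fixes a \emph{unique} vertex of $\what{\mathfrak T}(K)$; its projection $w\in V(\Gamma(K))$ satisfies, up to conjugacy,
\[
\what{f_*}^{-1}(\what{\pi_1(M_v)})\,\subset\,\what{\pi_1(K_w)}\,\subset\,\what{f_*}^{-1}(\what{\pi_1(M_{f(w)})}),
\]
and applying uniqueness once more in $\what{\mathfrak T}(M)$ forces $f(w)=v$. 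Uniqueness of $w$ makes $v\mapsto w$ a well-defined right inverse to $f$ on vertices, giving the bijection in one stroke. Your ``alternative route'' is gesturing at exactly this, but the decisive move---transporting the action to $\what{\mathfrak T}(K)$ rather than trying to build a map of trees---is what makes it work.
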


\begin{proof}
Since the manifolds $K$ and $M$ are clean, it follows, as we had observed in Section~\ref{section:clean}, that no piece in their respective JSJ-decompositions are twisted $I$-bundles over the Klein bottle. Then the actions of $\pi_1(K)$ on the JSJ-Bass-Serre tree 
$\mathfrak{T}(K)$ and of $\pi_1(M)$ on the Bass-Serre tree $\mathfrak{T}(M)$ are $2$-acylindrical. It follows from Lemma~\ref{lem:action-is-acylindrical} that the corresponding actions of $\what{\pi_1(K)}$ and $\what{\pi_1(M)}$ on the profinite Bass-Serre trees  $\what{\mathfrak{T}}(K)$ and $\what{\mathfrak{T}}(M)$ are also 2-acylindrical. 

Let $f\colon K \to M$ be a nice map such that $\what{f_*}\colon \what{\pi_1(K)} \to \what{\pi_1(M)}$ is an isomorphism. Then $\what{\pi_1(M)}$ acts $2$-acylindrically on the profinite tree 
$\what{\mathfrak{T}}(K)$ with abelian edge stabilizers  via the isomorphism $\what{f_*}^{-1}$. Therefore for each vertex $v \in V(\Gamma(M))$ the profinite vertex group $\what{\pi_1(M_v)}$ acts acylindrically on the profinite tree $\what{\mathfrak{T}}(K)$. Since $M_v$ is not a twisted $I$-bundle over the Klein bottle, Lemma \ref{lem:fixed-vertex} implies that $\what{f_*}^{-1}(\what{\pi_1(M_v)})$ fixes a unique vertex $\wti{w} \in V(\what{\mathfrak{T}}(K))$. Let $\what{p}_K(\wti{w}) = w \in V(\Gamma(K))$ its projection in $\Gamma(K) =\what{\mathfrak{T}}(K)/ \what{\pi_1(K)}$. Then, up to conjugation, we have the following inclusions:
\[ \what{f_*}^{-1}(\what{\pi_1(M_v)}) \,\,\subset\,\, \what{\pi_1(K_w)} \,\,\subset\,\, \what{f_*}^{-1}(\what{\pi_1(M_{f(w)})}).\]
Here the first inclusion comes from Theorem~\ref{thm:profinitegraph} (b3) and the second inclusion comes from the fact that $f$ defines a map of groups.
It follows from the inclusions that $\what{\pi_1(M_v)} \subset \what{\pi_1(M_{f(w)})}$ and by Lemma~\ref{lem:fixed-vertex} we have $f(w) =v$. Since $w \in V(\Gamma(K))$ is unique, $f$ induces a bijection $V(\Gamma(K)) \to V(\Gamma(M))$.
\end{proof}

\begin{lemma}\label{lem:vertex groups}
For each $v\in V(K)$ the induced map $f_*\colon \pi_1(K_v)\to \pi_1(M_{f(v)})$ is an isomorphism.
\end{lemma}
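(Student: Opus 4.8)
The plan is to bootstrap from the vertex bijection of Lemma~\ref{lem:vertices bijection} to an isomorphism on each vertex group, using the profinite Bass--Serre picture (Theorem~\ref{thm:profinitegraph}) together with the structure theory of $\pi_1$-injective maps between Haken manifolds. Fix $v\in V(\Gamma(K))$ and write $w=f(v)\in V(\Gamma(M))$. By Theorem~\ref{thm:deformationtheorem}, after a homotopy we may assume the restriction $f\colon K_v\to M_w$ behaves controllably on the pieces: in particular, on each component of $\ol{K_v\sm f^{-1}(\Sigma M)}$ and of $f^{-1}(\Sigma M)\sm \beta K_v$ the map is a covering. The first job is to show the edge groups incident to $v$ match up with those incident to $w$ under $f$; this should follow from Lemma~\ref{lem:vertices bijection} applied to the adjacency structure of the graphs (an edge of $\Gamma(K)$ joining $v$ to a neighbor maps to an edge of $\Gamma(M)$ joining $w$ to the image neighbor, and by bijectivity of $f$ on vertices together with the girth $\geq 3$ hypothesis there is no folding), so that $f$ carries the peripheral tori of $K_v$ bijectively to the peripheral tori of $M_w$.

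Next I would establish that $f_*\colon \pi_1(K_v)\to\pi_1(M_w)$ is injective and that the associated cover of $M_w$ is finite-sheeted. Injectivity: from Theorem~\ref{thm:profinitegraph} the vertex group $\what{\pi_1(K_v)}$ is (a conjugate of) the stabilizer of a vertex of $\what{\mathfrak{T}}(K)$, and chasing the two inclusions $\what{f_*}^{-1}(\what{\pi_1(M_w)}) \subset \what{\pi_1(K_v)} \subset \what{f_*}^{-1}(\what{\pi_1(M_w)})$ that appeared in the proof of Lemma~\ref{lem:vertices bijection} shows $\what{f_*}$ restricts to an isomorphism $\what{\pi_1(K_v)}\to \what{\pi_1(M_w)}$. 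Since $\pi_1(K_v)$ and $\pi_1(M_w)$ are residually finite, injectivity of $f_*$ on $\pi_1(K_v)$ follows. Then $f_*\colon \pi_1(K_v)\to\pi_1(M_w)$ realizes $K_v$ (up to homotopy) as a cover of $M_w$; to see it is finite I would compare Euler characteristics or use Lemma~\ref{lem:samefinitequotients}/Lemma~\ref{lem:same-finite-index-subgroups} to rule out an infinite-index image, exactly as in Proposition~\ref{prop:2}: a proper finite-index separable subgroup of $\pi_1(M_w)$ containing the image would pull back to a proper finite-index subgroup of $\pi_1(K_v)$, contradicting the profinite isomorphism (geometric pieces — hyperbolic or $S^1\times\Sigma$ — have LERF or at least the relevant separability).

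With $f\colon K_v\to M_w$ a finite cover, I would compute the degree and show it is $1$. Here the peripheral matching from the first paragraph is essential: the boundary tori of $K_v$ map bijectively to those of $M_w$, and each boundary restriction $f\colon T\to f(T)$ is a covering of tori of some degree $d_T$; meanwhile $H_1$ being preserved (Lemma~\ref{lem:sameh1}, applied to the vertex groups via the profinite isomorphism $\what{\pi_1(K_v)}\cong\what{\pi_1(M_w)}$) forces these boundary degrees, and hence the covering degree, to be $1$. Concretely, a degree-$n$ cover of a bounded Haken manifold with $n>1$ changes $H_1$ or the number of boundary components in a way incompatible with the $H_1$-isomorphism and the boundary bijection; spelling this out case by case (hyperbolic piece versus $S^1\times\Sigma$ piece) gives $n=1$, so $f_*$ is an isomorphism.

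The main obstacle I expect is the degree/$H_1$ bookkeeping that forces the covering $K_v\to M_w$ to be trivial --- in particular handling the Seifert pieces $S^1\times\Sigma$, where finite covers can be subtle (one can cover in the fiber direction or in the base), and making sure the peripheral structure really is preserved and not just the abstract groups. Getting the edge groups to correspond correctly --- i.e.\ verifying that $f$ does not collapse or permute the tori adjacent to $v$ in an unexpected way --- is where the girth $\geq 3$ hypothesis and the $2$-acylindricity must be used carefully, and is the delicate point underpinning everything else; once the peripheral bijection is in hand, the Euler characteristic and $H_1$ constraints close the argument.
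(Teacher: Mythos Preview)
Your core observation is right and matches the paper: the sandwich of inclusions from the proof of Lemma~\ref{lem:vertices bijection} forces $\what{f_*}$ to restrict to an isomorphism $\what{\pi_1(K_v)}\to\what{\pi_1(M_{f(v)})}$. But everything else you do is an unnecessary detour. The paper's proof is three lines from here: injectivity of $f_*\vert_{\pi_1(K_v)}$ is immediate because $f_*\colon\pi_1(K)\to\pi_1(M)$ is already injective (it is part of the standing hypotheses --- no need to argue via residual finiteness), and then one simply cites that the vertex groups $\pi_1(M_{f(v)})$ are themselves Grothendieck rigid: Seifert pieces are LERF by Scott~\cite{Sco78}, hence Grothendieck rigid by \cite[Corollary~2.6]{LR11}, and cusped hyperbolic pieces are Grothendieck rigid by \cite[Theorem~1.2]{LR11}. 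That finishes the lemma.

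Your route --- peripheral torus matching, realizing $K_v\to M_w$ as a finite cover, then squeezing the degree to $1$ via $H_1$ and boundary counting --- can be made to work, but it is reproving the known Grothendieck rigidity of the vertex groups by hand, and with more effort than the original proofs. Two specific points: (i) the separability/LERF argument you sketch does not merely give finite index, it already gives surjectivity (a proper finitely generated subgroup of a LERF group sits in a proper finite-index subgroup, contradicting Lemma~\ref{lem:same-finite-index-subgroups}), so the entire degree/$H_1$ paragraph is superfluous; (ii) the peripheral-torus bijection you spend the first paragraph on is exactly the content of Lemma~\ref{lem:edges}, which comes \emph{after} the present lemma and in fact uses it --- you do not need it here, and trying to establish it first risks circularity.
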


\begin{proof}
Let $v \in V(K)$. Since $f_*\colon \pi_1(K) \to \pi_1(M)$ is injective, the restriction $f_*\colon \pi_1(K_v)$ $\to \pi_1(M_{f(v)})$ is injective. On the other hand it follows from the proof of Lemma \ref{lem:vertices bijection} that  $\what{f_*}(\what{\pi_1(K_w)}) = \what{\pi_1(M_{f(w)})}$. Hence the monomorphism $f_*\colon \pi_1(K_v)\to \pi_1(M_{f(v)})$ induces an isomorphism 
$\what{f_*}\colon \what{\pi_1(K_w)} \to \what{\pi_1(M_{f(w)})}$. The fundamental groups of  Seifert fibered 3-manifolds are LERF by \cite{Sco78} and thus Grothendieck rigid by \cite[Corollary 2.6]{LR11}.  The fundamental group of a finite volume cusped  hyperbolic 3-manifold is also Grothendieck rigid by \cite[Theorem 1.2]{LR11} (in fact these groups are also LERF  by  Wise~\cite{Wis09,Wis12a,Wis12b}, see also  \cite[Flowchart~4]{AFW15}). Hence the vertex groups $\pi_1(M_{f(v)})$ are Grothendieck rigid. Therefore $f_*\colon \pi_1(K_v)\to \pi_1(M_{f(v)})$ is an isomorphism.
\end{proof}

It follows from Lemmas~\ref{lem:epimorphism}, \ref{lem:vertices bijection} and \ref{lem:vertex groups}
that the following lemma concludes the proof of  Proposition~\ref{prop:proof-section-3}.

\begin{lemma}\label{lem:edges} The map $f\colon \Gamma(K)\to \Gamma(M)$ induces a bijection $E(\Gamma(K)) \to E(\Gamma(M))$ on the set of edges. Moreover for each edge 
$e \in E(\Gamma(K))$ the map $f_*\colon \pi_1(T_e) \to \pi_1(T_{f(e)})$ is an isomorphism.
\end{lemma}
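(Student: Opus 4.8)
The plan is to reuse the acylindrical machinery from Lemma \ref{lem:vertices bijection}, but now applied to the *edge* groups rather than the vertex groups. The key point is that in a clean $3$-manifold $M$ with JSJ-graph of girth $\geq 3$, every JSJ-torus $T_e$ is adjacent to two *distinct* vertices $M_{v}$ and $M_{v'}$ (since girth $\geq 3$ rules out loops, i.e.\ edges with $i(e)=t(e)$, and also rules out bigons, i.e.\ a pair of vertices joined by two edges), and these two vertices determine $e$ uniquely. Combined with Lemma \ref{lem:vertices bijection}, which says $f$ restricts to a bijection on vertices, this will let me recover the edge bijection essentially from the combinatorics once I know that the images of adjacent vertices are adjacent and that $f$ is injective on edges.

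First I would establish that $f_* \colon \pi_1(K_v)\to\pi_1(M_{f(v)})$ being an isomorphism for all $v$ (Lemma \ref{lem:vertex groups}) forces $f\colon\Gamma(K)\to\Gamma(M)$ to be surjective on edges: Proposition \ref{prop:2} gives that $f_*\colon\pi_1(\Gamma(K))\to\pi_1(\Gamma(M))$ is an epimorphism, and since $f$ is already a bijection on vertices and $\Gamma(M)$ has girth $\geq 3$ (so in particular no loops or multiple edges at a vertex-pair), a surjection of connected graphs that is a bijection on vertices and an epimorphism on $\pi_1$ must be surjective on edges too — counting first Betti numbers, $b_1(\Gamma(K)) \geq b_1(\Gamma(M))$ forces $|E(\Gamma(K))| \geq |E(\Gamma(M))|$, while surjectivity of the vertex map plus adjacency-preservation gives the reverse inequality. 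Second, for injectivity on edges: suppose $e\neq e'$ in $E(\Gamma(K))$ map to the same edge $\bar e$ of $\Gamma(M)$. Since $f$ is a bijection on vertices, $e$ and $e'$ would have the same endpoint pair $\{v,v'\}$ in $\Gamma(K)$, contradicting girth $\geq 3$ of $\Gamma(K)$ (two distinct edges on the same vertex pair form a $2$-cycle). This handles the bijection $E(\Gamma(K))\to E(\Gamma(M))$.

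For the statement on edge groups: fix $e\in E(\Gamma(K))$ with torus $T_e$, adjacent to $K_v$. The inclusion $\pi_1(T_e)\hookrightarrow\pi_1(K_v)$ is one of the peripheral $\Z^2$-subgroups, and under the isomorphism $f_*\colon\pi_1(K_v)\xrightarrow{\sim}\pi_1(M_{f(v)})$ of Lemma \ref{lem:vertex groups} its image is some subgroup of $\pi_1(M_{f(v)})$. I would argue this image is precisely the peripheral subgroup $\pi_1(T_{f(e)})$: the map $f$ is nice, so it sends JSJ-tori into JSJ-tori (or into $\Sigma M$) in a way compatible with the graph map, hence $f_*(\pi_1(T_e))\subset \pi_1(T_{f(e)})$ up to conjugacy; conversely, using Lemma \ref{lem:same-finite-index-subgroups} (finite-index subgroups correspond under a profinite isomorphism) together with the fact that $\pi_1(T_{f(e)})$ is separable in $\pi_1(M)$ (Theorem \ref{thm:wz10}(2)), an index-$>1$ inclusion $f_*(\pi_1(T_e))\subsetneq\pi_1(T_{f(e)})$ would survive to a finite quotient and contradict the profinite isomorphism — more directly, since $f_*\colon\pi_1(K_v)\to\pi_1(M_{f(v)})$ is an isomorphism it sends the full peripheral structure of $K_v$ bijectively onto that of $M_{f(v)}$ (peripheral subgroups are characterized up to the Seifert-fibered ambiguity, which does not occur here since the pieces are products or cusped hyperbolic), and the edge bijection pins down which boundary torus goes where. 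Thus $f_*\colon\pi_1(T_e)\to\pi_1(T_{f(e)})$ is an isomorphism.

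The main obstacle I expect is the last point: showing $f_*$ carries the *specific* edge torus $\pi_1(T_e)$ onto $\pi_1(T_{f(e)})$ as a genuine isomorphism, not merely onto a finite-index or commensurable subgroup. One has to be careful that niceness of $f$ gives only containment $f_*(\pi_1(T_e))\subset\pi_1(T_{f(e)})$ up to conjugacy, and upgrading this to equality requires either the separability of the torus groups (Theorem \ref{thm:wz10}) with Lemma \ref{lem:same-finite-index-subgroups}, or an argument that the peripheral structure of a clean JSJ-piece (product Seifert piece or cusped hyperbolic piece) is rigid under isomorphisms of the piece group — together with the already-established vertex and edge bijections to match up the correct boundary components. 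I would reduce the matching of boundary components to the observation that a JSJ-torus is determined by the unordered pair of JSJ-pieces it separates (girth $\geq 3$), so the edge bijection and vertex bijection are mutually consistent, and then invoke separability to finish.
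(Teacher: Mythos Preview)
Your argument for the edge bijection is essentially the paper's: injectivity from the vertex bijection plus girth~$\geq 3$, together with the Euler-characteristic count using Proposition~\ref{prop:2} to get $|E(\Gamma(K))|\geq |E(\Gamma(M))|$. The presentation is a bit tangled (your ``reverse inequality'' from ``adjacency-preservation'' is really just the injectivity argument you give separately), but the content is the same.

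The gap is in the edge-group step. Neither of your two proposed routes closes it as written. Route~(a), invoking Lemma~\ref{lem:same-finite-index-subgroups} and separability of $\pi_1(T_{f(e)})$, does not apply: that lemma concerns finite-index subgroups of the whole group $\pi_1(M)$, whereas $\pi_1(T_{f(e)})$ has infinite index, and separability of $\pi_1(T_{f(e)})$ gives you nothing about distinguishing elements of $\pi_1(T_{f(e)})$ from the proper subgroup $f_*(\pi_1(T_e))$. Route~(b), asserting that the vertex isomorphism $f_*\colon\pi_1(K_v)\to\pi_1(M_{f(v)})$ carries the peripheral structure bijectively, is false in the Seifert case: for a product piece $S^1\times\Sigma$ the group is $\Z\times F_n$, and an abstract isomorphism of $\Z\times F_n$ can send $\Z\times\langle c_i\rangle$ (a boundary torus) to $\Z\times\langle w\rangle$ for an arbitrary primitive $w\in F_n$ that is not a boundary curve. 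So ``peripheral structure is intrinsic'' fails precisely in the case you claim is unproblematic.

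The paper's fix is short and uses exactly the ingredients you already have on the table: since $K$ and $M$ are clean, each edge group $\pi_1(T_e)$ is a \emph{maximal abelian} subgroup of its adjacent vertex group $\pi_1(K_v)$ (this is the role of the clean hypothesis here, via \cite[Chapter~2.5]{AFW15}). The isomorphism $f_*\colon\pi_1(K_v)\to\pi_1(M_{f(v)})$ therefore sends $\pi_1(T_e)$ to a maximal abelian subgroup of $\pi_1(M_{f(v)})$; since you already know $f_*(\pi_1(T_e))\subset\pi_1(T_{f(e)})$ from niceness, and $\pi_1(T_{f(e)})$ is abelian, maximality forces equality. This replaces your attempted peripheral-rigidity argument with a one-line algebraic observation.
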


\begin{proof}  
First we show that the map $f\colon \Gamma(K)\to \Gamma(M)$ induces an injection $E(\Gamma(K)) \to E(\Gamma(M))$ on the set of edges.
Let $e_1, e_2 \in E(\Gamma(K))$ such that $f(e_1) = f(e_2) \in E(\Gamma(M))$. 
By Lemma~\ref{lem:vertices bijection} we already know that  $f$ is bijective on the set of vertices. It thus follows from $f(e_1)=f(e_2)$ that the initial and terminal vertices of  $e_1$ and $e_2$ in $\Gamma(K)$ agree. 
It follows from this observation, together with our hypothesis that the girth of $\Gamma(K)$ is $\geq 3$, that  $e_1 = e_2$.

Now we show that $f\colon E(\Gamma(K)) \to E(\Gamma(M))$ is in fact a bijection.
We denote by $v$ the number of vertices of $\Gamma(K)$ and we denote by $e$ the number of edges of $\Gamma(K)$. Similarly we define $e'$ and $v'$ for $M$ instead of $K$. 
By the above we have $e\leq e'$. We need to show that $e=e'$. 
By Lemma~\ref{lem:vertices bijection} we have $v=v'$, which implies that $e-v\leq e'-v'$.
On the other hand Proposition~\ref{prop:2} implies that $e-v\geq e'-v'$.
Combining these inequalities we obtain that $e=e'$, as desired.

We turn to the proof of the last statement of the lemma.
For each edge $e \in E(\Gamma(K))$  the group $f_*(\pi_1(T_e))$ injects into  $\pi_1(T_{f(e)})$. Since $K$ and $M$ are clean manifolds it follows from \cite[Chapter~2.5]{AFW15} that the edge groups $\pi_1(T_e)$ and $\pi_1(T_{f(e)})$ are maximal abelian subgroups in the vertex groups $\pi_1(K_v)$ and $\pi_1(M_{f(v)})$ respectively, where $v \in V(\Gamma(K))$ is an extremity of $e$. Since by Lemma \ref{lem:vertex groups} 
the induced map $f_*\colon \pi_1(K_v)\to \pi_1(M_{f(v)})$ is an isomorphism, it follows that $f_*(\pi_1(T_e)) = \pi_1(T_{f(e)})$ and thus the induced map $f_*\colon \pi_1(T_e) \to \pi_1(T_{f(e)})$ is an isomorphism.
\end{proof}

\end{document}